\def\@url#1{{\tt\def~{\lower3.5pt\hbox{\char'176}}\def\_{\char'137}#1}}
\let\fullref\autoref
\def\makeautorefname#1#2{\expandafter\def\csname#1autorefname\endcsname{#2}}
                   \let\c@lemma\c@theorem
\newtheorem{thm}{Theorem}[section]
\newtheorem{cor}{Corollary}[section]
\newtheorem{prop}{Proposition}[section]
\newtheorem{lem}{Lemma}[section]
\theoremstyle{definition}
\newtheorem{rem}{Remark}[section]
\newtheorem{notation}{Notation}[section]
\let\c@lem=\c@thm
\let\c@cor=\c@thm
\let\c@prop=\c@thm
\let\c@lem=\c@thm
\let\c@defn=\c@thm
\let\c@exmps=\c@thm
\let\c@rem=\c@thm
\let\c@warn=\c@thm
\let\c@claim=\c@thm
\let\c@quest=\c@thm
\let\c@notation=\c@thm
\let\c@note=\c@thm
\numberwithin{equation}{section}
\newcommand{\Z}{\mathbb{Z}}
\newcommand{\Q}{\mathbb{Q}}
\newcommand{\F}{\mathbb{F}}
\newcommand{\G}{\mathbb{G}}
\newcommand{\W}{\mathbb{W}}
\newcommand{\cE}{\mathcal{E}}
\DeclareSymbolFontAlphabet{\scr}{rsfs}
\newcommand{\smsh}{\wedge}
\newcommand{\xra}{\xrightarrow}
\def\quickop#1{\expandafter\newcommand\csname #1\endcsname{\operatorname{#1}}}
\DeclareMathOperator{\Gal}{Gal}
\DeclareMathOperator{\Pic}{Pic}
\newcommand{\mono}{{\mathrm{M}}} %monochromatic layer M_n
\newcommand{\Kn}{{\mathrm{K}}} %Morava K-theory K(n)
\newcommand{\E}{{\mathrm{E}}} 
\newcommand{\Dn}{{\mathrm{D}}} 
\newcommand{\I}{{\mathrm{I}}} % Morava E-theory E_m
\newcommand{\ssE}{{\mathscr{E}}}%E for ss 
\newcommand{\ssM}{{\mathscr{M}}}%E for ss 
\newcommand{\ssF}{{\mathscr{F}}}%F for ss 
\newcommand{\ssK}{{\mathscr{K}}}%K for ss 
\newcommand{\Sdet}{S\langle {\det} \rangle}
\newcommand{\Edet}{\E_*\langle {\det} \rangle}
\definecolor{darkspringgreen}{rgb}{0.09, 0.45, 0.27}
\definecolor{darkterracotta}{rgb}{0.8, 0.31, 0.36}
	\definecolor{darkcoral}{rgb}{0.8, 0.36, 0.27}
	\definecolor{indiagreen}{rgb}{0.07, 0.53, 0.03}
	\definecolor{mountainmeadow}{rgb}{0.19, 0.73, 0.56}
	\definecolor{mountbattenpink}{rgb}{0.6, 0.48, 0.55}
	\definecolor{palatinatepurple}{rgb}{0.41, 0.16, 0.38}
	\definecolor{cinnamon}{rgb}{0.82, 0.41, 0.12}
	\definecolor{chocolate}{rgb}{0.82, 0.41, 0.12}
\definecolor{darkspringgreen}{rgb}{0.09, 0.45, 0.27}
\newcommand{\stkout}[1]{\ifmmode\text{\sout{\ensuremath{#1}}}\else\sout{#1}\fi}
\DeclareMathOperator{\fib}{fib}
\title{Gross--Hopkins Duals of Higher Real K--theory Spectra}
\author[Barthel]{Tobias Barthel}
\address[Barthel]{Max Planck Institute for Mathematics}
\email{tbarthel@mpim-bonn.mpg.de}
\author[Beaudry]{Agn\`es Beaudry}
\address[Beaudry]{Department of Mathematics, University of Colorado Boulder}
\email{agnes.beaudry@colorado.edu}
\author[Stojanoska]{Vesna Stojanoska}
\address[Stojanoska]{Department of Mathematics, University of Illinois at Urbana-Champaign}
\email{vesna@illinois.edu}
\thanks{This material is based upon work supported by the National Science Foundation under Grant No.~DMS-1606479 and Grant No.~DMS-1612020/1725563. The first-named author was partially supported by the DNRF92.}
\subjclass[2010]{55M05, 55P42, 20J06, 55Q91, 55Q51, 55P60}
\begin{document}

\maketitle

\begin{abstract}
We determine the Gross--Hopkins duals of certain higher real $K$--theory spectra. More specifically, let $p$ be an odd prime, and consider the Morava $E$--theory spectrum of height $n=p-1$. It is known, in the expert circles, that for certain finite subgroups $G$ of the Morava stabilizer group, the homotopy fixed point spectra $ E_n^{hG}$ are Gross--Hopkins self-dual up to a shift. In this paper, we determine the shift for those finite subgroups $G$ which contain $p$--torsion. This generalizes previous results for $n=2$ and $p=3$.
\end{abstract}

%\setcounter{tocdepth}{1}
%\tableofcontents

\section{Introduction}\label{sec:state}

To understand the sphere spectrum at a prime $p$, one first studies its building blocks, which mirror the stratification of formal groups by height; these are the $K(n)$--local spheres $S_{K(n)}$. The behavior at height $n$ is governed by a profinite group of virtual cohomological dimension $n^2$, the Morava stabilizer group $\G_n$. One of the most stunning theorems of chromatic homotopy theory states that $S_{K(n)}$ is the homotopy fixed point spectrum for the action of $\G_n$ on Morava $E$--theory $E_n$. This is a complex oriented ring spectrum whose formal group law is a universal deformation of the formal group law of $K(n)$, a height $n$ formal group law. Computing the homotopy groups of $K(n)$--local spectra is thus intimately related to computing continuous group cohomology for $\G_n$ via homotopy fixed point spectral sequences.

When $p$ is large with respect to $n$, these spectral sequences collapse and the problems become entirely algebraic, although notoriously difficult if $n>1$. The case of interest in this paper is $n=p-1$. This is the tipping point for the difficulty in these types of computations as it is just outside of the algebraic range. At these heights, lying between Morava $E$--theory and the $K(n)$--local sphere, are the so-called ``higher real $K$--theories''. The name is justified, as for $n=1$ and $p=2$ one gets real $K$--theory. These spectra are constructed as homotopy fixed points $E_n^{hH}$ of Morava $E$--theory with respect to certain finite subgroups $H$ of the Morava stabilizer group. Henn \cite{henn_res} has constructed algebraic resolutions that indicate that the $K(n)$--local sphere can be realized as the inverse limit of a finite tower of fibrations built using higher real $K$--theory spectra. These are topologically realized at large primes in the same reference; more work is needed at small primes. (For $n=1$ and $p=2$ this is classical, see \cite{HMS_pic}; for $n=2$ and $p=3$, it is the subject of \cite{ghmr}, the case $n=p-1$ is treated in \cite{henn_res}, while $n=2=p$ is accomplished in \cite{goerss_bobkova}.) In any case, the expectation is that the resolutions should be realizable for any $n$ and $p$. Understanding the higher real $K$--theory spectra and their module categories is thus a first step towards understanding $K(n)$--local spectra at $n=p-1$. Furthermore, the higher real $K$--theories have the advantage of being computationally tractable. For example, Hopkins and Miller computed the homotopy groups of higher real $K$--theory spectra at $n=p-1$, modulo the image of a transfer map. 

The main result of this paper, \fullref{main:intro}, fits into a broader program to study duality and its implications for the Picard group of the $K(n)$--local category at $n=p-1$. The algebro-geometric manifestation of the duality in question is Grothendieck--Serre duality on Lubin--Tate space, due to Gross and Hopkins~\cite{grosshopkins2,grosshopkins1}. In full, the duality takes into account the action by $\G_n$; here we restrict to finite subgroups and study Gross--Hopkins duality for higher real $K$--theory spectra. 

To give context to our result, we need to introduce some notation. Since we work in the $K(n)$--local category, we will write $X \smsh Y$ for the $K(n)$--local smash product $(X \smsh Y)_{K(n)}$ and $(E_n)_*X$ for $\pi_*(X \smsh E_n)_{K(n)}$. As usual, $\Pic_n$ denotes the Picard group of the category of $K(n)$--local spectra, so it consists of ($K(n)$--local) equivalence classes of spectra $X$ for which there exists $Y$ with
$X \smsh Y \simeq S_{K(n)}$. The exotic Picard group $\kappa_n$ is the subgroup of $\Pic_n$ consisting of those elements $X \in \Pic_n$ such that $(E_n)_*X \cong (E_n)_*$ as $\G_n$--equivariant $(E_n)_*$--modules, i.e., as Morava modules. 

We propose to use higher real $K$--theory spectra to detect exotic elements at $n=p-1$ as follows. Let $\Pic(E_n^{hH})$ denote the Picard group of the category of $K(n)$--local $E_n^{hH}$--module spectra. For any subgroup $H \subseteq \G_n$, there is a map $\Pic_n \to \Pic(E_n^{hH})$ defined by $X \mapsto X \smsh E_n^{hH}$. One may be able to test the non-triviality of $X \in \kappa_n$ by showing that $X \smsh E_n^{hH}$ is non-trivial in $\Pic(E_n^{hH})$. In contrast with $\Pic_n$, the Picard group of $E_n^{hH}$ is known and simple;
Heard, Mathew, and Stojanoska \cite{matstohea_piceo} have shown it is cyclic, so that $X \smsh E_n^{hH} \simeq \Sigma^{k} E_n^{hH}$ for some integer $k$. The spectra $E_n^{hH}$ are periodic, so the problem reduces to determining whether or not $k\equiv 0$ modulo the periodicity.

Turning to Gross--Hopkins duality, let $I_{\Q/\Z}$ be the Brown--Comenetz spectrum, defined as the representing spectrum for the cohomology theory which assigns to
$X$ the abelian group $\Hom(\pi_{0}(X),\Q/\Z)$.
For $X$ a $K(n)$--local spectrum, the Gross--Hopkins dual of $X$ is defined as $I_nX  = F(M_nX, I_{\Q/\Z})$, where $M_nX$ is the monochromatic layer of $X$. Note that by a result of Greenlees and Sadofsky~\cite{greensad_tate}, if $H$ is a finite subgroup of $\G_n$, the Tate construction $E_n^{tH}$ vanishes $K(n)$--locally, so the norm $(E_n)_{hH} \to (E_n)^{hH}$ is a $K(n)$--equivalence. It follows that
$I_n(E_n^{hH}) \simeq I_n((E_n)_{hH}) \simeq (I_nE_n)^{hH}$, and similarly, for the Spanier--Whitehead duals $D(E_n^{hH}) \simeq (DE_n)^{hH}$. Therefore, we may write $I_nE_n^{hH}$ or $DE_n^{hH}$ unambiguously. At height $n=p-1$, it seems to be well-known that $I_nE_n^{hH}$ is an invertible $E_n^{hH}$--module; we give a detailed proof in \fullref{prop:diffsforced}. 
It follows by the remarks above that $I_nE_n^{hH} \simeq \Sigma^{k_I} E_n^{hH}$, for some integer $k_I$.

The main purpose of this paper is to determine the shift $k_I$ for certain finite subgroups of $\G_n$. Let $\mathbb{S}_n$ be the small Morava stabilizer group, that is, the automorphism group of the Honda formal group law over $\F_{p^n}$. At height $n=p-1$, $\mathbb{S}_n$ contains a maximal finite subgroup $F \cong C_p \rtimes C_{n^2}$. It can be extended by the Galois group to give a group $G$ which is a maximal finite subgroup of $\G_n$. For example, at $p=3$, $G$ is the group denoted by $G_{24}$ in \cite{ghmr}. We can now state our main result, which is proved as Theorems~\ref{thm:main} and \ref{thm:mainF} in \fullref{sec:final}.
\begin{thm}\label{main:intro}
For $p\geq 3$, $n=p-1$, there are equivalences $I_n E_n^{hC_p} \simeq \Sigma^{n^2} E_n^{hC_p}$,  $I_n E_n^{hF} \simeq \Sigma^{np^2+n^2 } E_n^{hF}$ and $I_nE_n^{hG} \simeq \Sigma^{np^2+n^2 } E_n^{hG}$.
\end{thm}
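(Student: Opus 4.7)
The strategy is to reduce the computation of $I_nE_n^{hH}$ to a character calculation via Gross--Hopkins duality for $E_n$, descended to $H$--homotopy fixed points, and then matched against the cyclic Picard structure of $E_n^{hH}$--modules. Since $\Pic(E_n^{hH})$ is cyclic by Heard--Mathew--Stojanoska~\cite{matstohea_piceo} and the invertibility of $I_nE_n^{hH}$ as an $E_n^{hH}$--module is established in \fullref{prop:diffsforced}, the class $I_nE_n^{hH}$ is pinned down by a single integer $k_I$ modulo the periodicity of $E_n^{hH}$, which we compute by matching Morava modules.

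The main input is the Gross--Hopkins formula, which identifies the Morava module of $I_nE_n$ with a shift of $(E_n)_\ast$ twisted by the determinant character $\det\colon\G_n\to\Z_p^\times$. Combined with the footnoted identity $I_n(E_n^{hH})\simeq (I_nE_n)^{hH}$ for finite $H$, this expresses $(E_n)_\ast I_nE_n^{hH}$ as the $H$--fixed points of a twisted shift of $(E_n)_\ast$. Computing $k_I$ then reduces to (i) identifying the restriction $\det|_H$ as a character of $H$, and (ii) translating that character together with the base Gross--Hopkins shift into a suspension shift of $E_n^{hH}$ via the $H$--action on the orientation class $u\in\pi_{-2}E_n$.

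The subgroup analysis distinguishes two flavors. For $H = C_p$, we use that $\Z_p^\times\cong\mu_{p-1}\times(1+p\Z_p)$ has trivial $p$--torsion for odd $p$, so $\det|_{C_p}$ is trivial and the determinant twist vanishes; the resulting shift $k_I = n^2$ is read off from the Gross--Hopkins shift combined with the periodicity of $E_n^{hC_p}$. For $H = F = C_p\rtimes C_{n^2}$, the restriction $\det|_F$ factors through $F/C_p = C_{n^2}$ and is determined by the action of a Teichm\"uller generator of $C_{n^2}$ on the invariant differential of the Honda formal group; this contributes an additional $np^2$ summand, yielding $k_I = np^2 + n^2$. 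The case $H = G$ reduces to $F$ since $G/F \cong \Gal(\F_{p^n}/\F_p)$ has order $n$ coprime to $p$, and its contribution does not change $k_I$ modulo the periodicity of $E_n^{hG}$.

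The main obstacle is step (ii): translating the explicit $\det$--twist on Morava modules into a genuine suspension shift of $E_n^{hH}$. This is immediate for $C_p$, but substantially more involved for $F$, where one must identify the action of a generator of $C_{n^2}$ on Lubin--Tate space, compute its determinant in terms of the Teichm\"uller character, and match the resulting twist against the graded periodicity encoded in $\Pic(E_n^{hF})$. The precise production of the $np^2$ summand for $F$ (and hence for $G$) is the combinatorial heart of the computation.
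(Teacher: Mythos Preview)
Your plan correctly identifies the $H$--equivariant $\E_*$--module $\pi_*\I\E \cong \Sigma^{-n}\E_*\langle\det\rangle$ via Strickland, and the character analysis of $\det|_H$ is along the right lines (this is essentially \fullref{lem:isodeltak} in the paper). But that data only pins down the $E_2$--page of the homotopy fixed point spectral sequence, and the $E_2$--page does not determine the shift. For $H = C_p$ the Tate $E_2$--term $\widehat{H}^*(C_p,\E_*)$ is $2p$--periodic on the invertible class $\delta$, while $\pi_*\E^{hC_p}$ is only $2p^2$--periodic (on $\delta^p$, since $d_{2n+1}(\delta)\neq 0$); for $H = F$ or $G$ the analogous periods are $2pn^2$ versus $2p^2n^2$. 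So matching Morava modules leaves a $p$--fold ambiguity in $k_I$. Concretely, for $C_p$ the module identification yields only $k_I \equiv -n \pmod{2p}$, which is consistent with $n^2$ (since $n^2 + n = np$ is divisible by $2p$) but does not single out $n^2$ among the $p$ candidates modulo $2p^2$. The $C_p$ case is therefore not ``immediate''; it has exactly the same gap as the $F$ case.

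The paper closes this gap by comparing differentials, not just $E_2$--pages. First (\fullref{prop:dualhfss}) the HFPSS for $(\I\E)^{hH}\simeq (I_{\Q/\Z}\mono\E)^{hH}$ is identified with the Pontryagin dual of the HOSS for $(\mono\E)_{hH}$. Second (\fullref{prop:hom}), a geometric boundary argument along the chromatic filtration of the map $\mono\E \to \E$ shows that the Tate spectral sequence for $\mono\E$ is that for $\E$ shifted by $(n,n)$ in $(s,t)$; the key input here is the vanishing \fullref{lem:vanishingMk}, which ultimately rests on the Hopkins--Miller description of $\E_*/I_k$ as a $C_p$--representation. These combine into \fullref{prop:dualcoh}, an isomorphism of full spectral sequences between the TSS for $\I\E$ and a shifted Pontryagin dual of the TSS for $\E$. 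The shift $k_I$ is then determined by locating, under this correspondence, an explicit $d_{2n+1}$--cycle on the zero line (\fullref{rem:shift} and the classes $\varepsilon^*$ in the proofs of \fullref{thm:main} and \fullref{thm:mainF}). This differential--level information is precisely what a Morava--module or character match cannot supply.
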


This result is well-known when $p=3$ for the group $G$: it follows from work of Mahowald--Rezk \cite{MahowaldRezk}, as described in Behrens \cite[Proposition 2.4.1]{behrens_mod}. Alternatively, it also follows by $K(2)$--localizing the result of \cite{Stoj-th}, and in some sense, our proof (for any $p$) specializes to a $K(2)$--local variant of that paper.
However, our computational techniques are different. The input to \cite{MahowaldRezk} is the mod--$p$ homology of connective versions of the spectra of interest and
 \cite{Stoj-th} relies heavily on algebro-geometric input. In contrast, the methods used in this paper are intrinsically $K(n)$--local. 

Of course, the theorem also has a well-known analogue at height $n=1$ and $p=2$. Namely, in this case, $E_1$ is $2$--completed complex $K$--theory, the Galois group is trivial and 
$C_2$ is the unique non-trivial finite subgroup acting on $E_1$ by complex conjugation with homotopy fixed points being $2$--completed real $K$--theory. By $K(1)$--localizing the result of Anderson \cite{Anderson} (see \cite{HS-KRD} for a more modern approach), one gets $I_1(E_1^{hC_2}) \simeq \Sigma^5 E_1^{hC_2}$. 

For $H=C_p$, $F$ or $G$ in \fullref{main:intro}, we prove the result by comparing the homotopy fixed point spectral sequence of $I_n(E_n^{hH})\simeq(I_nE_n)^{hH}$ to the $\Q/\Z$--dual of the homotopy orbit spectral sequence of $(M_n E_n)_{hH}$. The key computational ingredient is then a theorem of Hopkins--Miller which provides an explicit description of the $H$--module structure of $(E_n)_*$ at height $n=p-1$. To finish this introduction, we give an indication of how our result can be used to detect a family of interesting invertible $K(n)$--local spectra.

The Gross--Hopkins dual of the sphere, $I_n= I_nS_{K(n)}$, is of particular interest as a dualizing object. The spectrum $I_n$ is an element of $\Pic_n$ by the work of Hopkins and Gross \cite{grosshopkins1,grosshopkins2}; see also ~\cite[Theorem 10.2(e)]{hovstrmemoir} and \cite[Theorem 2]{StrickGrossHop}. In fact, their work implies an equivalence 
\begin{equation}\label{eqn:In} I_n \simeq \Sdet \smsh S^{n^2-n} \smsh P_n,\end{equation}
where $P_n$ is an element of $\kappa_n$ and $\Sdet \in \Pic_n$ can be described as follows.

Following \cite{det}, let $S(1)$ be the $p$-complete
sphere with its natural action of $\Z_p^{\times} \cong (\pi_0S_p)^{\times}$. 
Then the determinant sphere is defined as
\[\Sdet :=  (E_n \smsh S(1))^{h\G_n}.\]
In \cite{det}, it is shown that the spectrum $\Sdet$ satisfies
$(E_n)_*\Sdet \cong (E_n)_*$ as $(E_n)_*$--module, but its action of $\G_n$ is twisted by the determinant (see \eqref{eqn:actdet}). Moreover, a key property of $\Sdet$ proved in \cite{det} is that, for any closed subgroup $H$ of the stabilizer group $\mathbb{G}_n$ which is in the kernel of the determinant, $E_n^{hH}\wedge S \langle \det \rangle\simeq E_n^{hH}$.

At large primes, $\kappa_n$ is trivial (see \cite[Proposition~7.5]{HMS_pic}) so that $I_n$ is equivalent to $ \Sdet\smsh S^{n^2-n} $. 
However, in cases when $\kappa_n$ is non-trivial, $P_n$ can provide an interesting twist. This happens at $n=2$ and $p=3$; Goerss and Henn in \cite{GH_bcdual} show that $P_2$ has order 3. 
In particular, they show that $P_2$ is detected by the higher real $K$--theory spectrum $E_2^{hG}$. This raises the natural question of whether or not higher real $K$--theory spectra detect $P_n$ at height $n=p-1$. 

A strategy to answer the question more generally is to combine our result with information about the $K(n)$--local Spanier--Whitehead dual $DE_n^{hH}= F(E_n^{hH}, S_{K(n)})$. Again, $DE_n^{hH}$ is an invertible $E_n^{hH}$--module (see \fullref{prop:diffsforced}) so that $DE_n^{hH} \simeq \Sigma^{k_D} E_n^{hH}$, for some integer $k_D$. Further, because $I_n$ is invertible, we have $I_n E_n^{hH} \simeq I_n \smsh DE_n^{hH}$.

If we assume further that $H$ is in the kernel of the determinant (for example, if $H=C_p$), then $E_n^{hH} \smsh \Sdet \simeq E_n^{hH}$. 
Then \eqref{eqn:In} implies that
\begin{equation*}
 P_n \smsh E_n^{hH} \simeq \Sigma^{k_I-k_D+n-n^2}E_n^{hH}.   
\end{equation*}
Provided that $k_I-k_D+n-n^2$  is not congruent to the periodicity of $E_n^{hH}$, the higher real $K$--theory spectrum $E_n^{hH}$ will detect $P_n$ as non-trivial. Here we determine $k_I$; to gain insight on whether $H$ detects $P_n$ in general, it remains to determine the more elusive shift $k_D$.

\subsection*{Notation and conventions}

Throughout this article, we fix an odd prime $p$, and we work at height $n = p-1$. 
Let $\Kn=K(n)$ be Morava $K$--theory chosen so that the formal group law of $\Kn$ is the Honda formal group law. Let $\E=E_n$ be Morava $E$--theory with coefficients
\[ \E_* =\W\llbracket u_1,\dots, u_{n-1}\rrbracket[u^{\pm 1}], \]
where $\W$ is the ring of Witt vectors of $\F_{p^n}$.
Let $\mathbb{S}=\mathbb{S}_n$ be the automorphism group of the formal group law of $\Kn$ and $\G =\G_n$ be its extension by the Galois group of $\F_{p^n}/\F_p $. Often these are called the small and big Morava stabilizer group, respectively. 

For $(-)_{A}$ the Bousfield localization at a spectrum $A$, recall that there is a natural transformation $(-)_{E_n} \to (-)_{E_{n-1}}$ whose fiber, denoted $\mono=M_n$, is called the $n$--th monochromatic layer. Unless otherwise specified, $S = S_{\Kn}$ is the $\Kn$--local sphere spectrum and, for $\Kn$--local spectra $X$ and $Y$, $X \smsh Y$ denotes $ (X \smsh Y)_{\Kn}$ and $\Dn(X) = F(X, S_{\Kn})$.

We let $D_{\Q/\Z}(-) = \Hom_{\Z}(-,\Q/\Z)$ denote Pontryagin duality, and $I_{\Q/\Z}$ is the Brown--Comenetz spectrum which represents the cohomology theory
\[ X \mapsto D_{\Q/\Z}(\pi_{*}(X)).\]
Note that for $ I_{\Q/\Z} X = F(X, I_{\Q/\Z})$, we have $\pi_{q} I_{\Q/\Z}  X  \cong D_{\Q/\Z}(\pi_{-q}(X))$.
For $X$ a $\Kn$--local spectrum, we let $\I X = I_nX$  be the Gross--Hopkins dual, defined as $F(\mono X, I_{\Q/\Z})$ and we let $\I = I_nS$.

We provide a little more information about the finite subgroups of $\G$. As was noted above, at height $n=p-1$, the group $\mathbb{S}$ contains a maximal finite subgroup $F \cong C_p \rtimes C_{n^2}$. We can give $F$ the presentation
\begin{equation}\label{eqn:F}
F = \left< \zeta, \tau \mid \zeta^p=1, \tau^{n^2}=1, \tau^{-1}\zeta \tau = \zeta^e\right>,
\end{equation}
where $e$ is a generator of $\Z/p^{\times}$. The group $F$ can be extended by the Galois group
\begin{equation}\label{eqn:G}
1 \to F \to G \to \Gal(\F_{p^n}/\F_p) \to 1,
\end{equation}
making $G$ a maximal finite subgroup of $\G$ as explained in \cite{henn_res} or \cite[Section 2]{heard_eop}. 
For example, at $p=3$, $G$ is the group denoted by $G_{24}$ in \cite{ghmr}.

\subsection*{Organization of the paper}

In \fullref{sec:prelim}, we explain how Pontryagin duality interacts with the homotopy fixed point and homotopy orbit spectral sequences. We then record a version of the geometric boundary theorem, which we apply in \fullref{sec:mono} to obtain a description of the Tate spectral sequence for the monochromatic layer, which is used in \fullref{sec:final} to prove our main results. 

\subsection*{Acknowledgements}
The authors would like to thank Mark Behrens, Paul Goerss, Hans-Werner Henn, Mike Hopkins and Craig Westerland for useful conversations, as well as the referee for useful suggestions and corrections.

\section{Preliminaries on the homotopy fixed point spectral sequence}\label{sec:prelim}

We recall the construction of the homotopy fixed point and homotopy orbit spectral sequences and explain how they are related via Pontryagin duality. 
Furthermore, we give a weak version of the geometric boundary theorem for the homotopy orbit spectral sequence for a finite group, which will be used in the next section.

\subsection{Dualizing the homotopy orbits spectral sequence}

We describe a spectral sequence, which, in certain cases, computes the homotopy groups of $I_{\Q/\Z}(X^{hH})$ where $H$ is a finite group and $X$ is an $H$--spectrum.

Let $H$ be a finite group and let $EH^{(s)}$ be the $s$'th skeleton of $EH$. Consider the family of cofiber sequences 
\begin{equation}\label{eqn:egskel}
\Sigma^{\infty}EH^{(s)}_{+} \to \Sigma^{\infty}EH^{(s+1)}_{+} \to \Sigma^{\infty}EH^{(s+1)}/EH^{(s)} \end{equation}
Applying $-\smsh_{H}X$ and taking homotopy groups gives rise to an exact couple whose associated spectral sequence is the homotopy orbits spectral sequence (HOSS), with 
 \[\ssE^2_{s,t} = H_s(H,\pi_tX) \Longrightarrow \pi_{t+s} X_{hH}\]
 and differentials $d_r^{\ssE}\colon \ssE^r_{s,t} \to \ssE^r_{s-r,t+r-1}$.
 
 Recall the definition of $I_{\Q/\Z}$ from \fullref{sec:state}. Further, note that
 \[ (I_{\Q/\Z}X)^{hH} \simeq I_{\Q/\Z}(X_{hH}).\]
Applying the functor $F_H(-, F(X, I_{\Q/\Z}))$ to \eqref{eqn:egskel} and taking homotopy groups gives the homotopy fixed point spectral sequence (HFPSS)
  \[\ssF_2^{s,t} = H^s(H,\pi_tI_{\Q/\Z}X) \Longrightarrow \pi_{t-s} ((I_{\Q/\Z}X)^{hH})\]
 with differentials $d_r^{\ssF}\colon \ssF_r^{s,t} \to \ssF_r^{s+r,t+r-1}$.
 However, note that 
 \[F_H(-, F(X, I_{\Q/\Z})) \simeq F(-\smsh_{H}X, I_{\Q/\Z}). \]
 Therefore, the HFPSS is isomorphic to the one associated to the exact couple obtained by applying $\pi_*F(-\smsh_{H}X, I_{\Q/\Z})$ to the cofiber sequences \eqref{eqn:egskel}, which is easily seen to be of signature
  \[\ssK_2^{s,t}   = D_{\Q/\Z}(H_s(H,\pi_{-t}X)) \Longrightarrow 
  \pi_{t-s}I_{\Q/\Z} (X_{hH}) \]
with differentials $d_r^{\ssK}\colon \ssK_r^{s,t} \to \ssK_r^{s+r,t+r-1}$ given by 
 \[ D_{\Q/\Z}(\ssE^r_{s,-t}) \xra{ D_{\Q/\Z}(d_r^{\ssE})}  D_{\Q/\Z}(\ssE^r_{s+r, -t-r+1}).\]
So the differentials $d_r^{\ssK}$ are completely determined by those in the HOSS. We record this in the following result.
 
 \begin{prop}\label{prop:dualhfss}
The homotopy fixed point spectral sequence
 \[ \ssF_2^{s,t} = H^{s}(H, \pi_t I_{\Q/\Z}X) \Longrightarrow \pi_{t-s} (I_{\Q/\Z}X)^{hH}  \]
 with differentials $d_r^{\ssF}\colon \ssF_r^{s,t} \to \ssF_r^{s+r,t+r-1}$ is Pontryagin dual to the homotopy orbits spectral sequence computing $\pi_*X_{hH}$. In particular, its differentials are completely determined by the latter.
 \end{prop}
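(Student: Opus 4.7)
The statement essentially amounts to unwinding the definition of the HFPSS for $(I_{\Q/\Z}X)^{hH}$ and comparing its defining exact couple term-by-term with the Pontryagin dual of the exact couple defining the HOSS for $X_{hH}$. The strategy is indicated in the preamble before the proposition; I would make the comparison explicit in three steps.

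First, I would recall that the HFPSS converging to $\pi_*(I_{\Q/\Z}X)^{hH}$ is constructed by applying $\pi_* F_H(-, F(X, I_{\Q/\Z}))$ to the cofiber sequences \eqref{eqn:egskel}. The key formal manipulation is the natural equivalence
\[
F_H(Z, F(X, I_{\Q/\Z})) \simeq F(Z \smsh_H X, I_{\Q/\Z}),
\]
which follows by combining the standard hom–tensor adjunction $F(Z, F(X, W)) \simeq F(Z \smsh X, W)$ with the fact that $I_{\Q/\Z}$ is a trivial $H$–spectrum, so that taking $H$–fixed points of the function spectrum corresponds to taking $H$–orbits on the source. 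Applying this with $Z = \Sigma^\infty EH^{(s)}_+$ identifies the exact couple defining $\ssF$ with the exact couple obtained by applying $\pi_* F(-\smsh_H X, I_{\Q/\Z})$ to \eqref{eqn:egskel}, which is exactly the exact couple defining $\ssK$.

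Second, I would verify that the Pontryagin dualization $D_{\Q/\Z}$ sends the exact couple for the HOSS to the exact couple for $\ssK$. Since $\Q/\Z$ is an injective abelian group, the functor $D_{\Q/\Z}$ is exact on long exact sequences of abelian groups and turns the colimit system for the HOSS into the desired inverse system; the induced maps in the dualized exact couple are literally the $D_{\Q/\Z}$–duals of the maps in the HOSS exact couple. This forces $d_r^{\ssK} = D_{\Q/\Z}(d_r^{\ssE})$ after the appropriate reindexing, which is precisely the signature stated.

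Third, I would reconcile the $E_2$–pages $\ssF_2^{s,t} = H^s(H, \pi_t I_{\Q/\Z} X)$ and $\ssK_2^{s,t} = D_{\Q/\Z}(H_s(H, \pi_{-t} X))$ via the natural isomorphism
\[
H^s(H, D_{\Q/\Z} M) \;\cong\; D_{\Q/\Z}(H_s(H, M)),
\]
valid for any abelian group $M$ with $H$–action; this follows from the adjunction $\Hom_{\Z[H]}(F_\bullet, \Hom_\Z(M, \Q/\Z)) \cong \Hom_\Z(F_\bullet \otimes_{\Z[H]} M, \Q/\Z)$ applied to a projective resolution $F_\bullet \to \Z$ over $\Z[H]$, combined again with the injectivity of $\Q/\Z$. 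Since the identification of exact couples made in the first two steps is natural, this $E_2$–isomorphism is automatically compatible with the differentials, finishing the proof.

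The argument is essentially formal; the only point requiring care is ensuring that the various adjunctions and the interchange of fixed points with function spectra are correctly applied at the level of $H$–spectra, so I expect the main obstacle to be bookkeeping rather than any substantive computation.
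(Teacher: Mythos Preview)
Your proposal is correct and follows essentially the same route as the paper: the paper's argument is contained in the discussion preceding the proposition, where the adjunction $F_H(-, F(X, I_{\Q/\Z})) \simeq F(-\smsh_{H}X, I_{\Q/\Z})$ is used to identify the HFPSS exact couple with the exact couple obtained by applying $\pi_*F(-\smsh_H X, I_{\Q/\Z})$ to the skeletal filtration, which is then read off as the Pontryagin dual of the HOSS exact couple. Your three steps make explicit a couple of points (exactness of $D_{\Q/\Z}$, the universal-coefficients identification $H^s(H, D_{\Q/\Z}M)\cong D_{\Q/\Z}(H_s(H,M))$) that the paper leaves to the reader, but the underlying argument is the same.
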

 
Therefore, to compute $\pi_{*}(I_{\Q/\Z} X)^{hH}$, it suffices to fully understand the HOSS for $X_{hH}$. 

\begin{rem}\label{rem:tate}
Let $\widehat{H}^*(H, \pi_tX)$ denote the Tate cohomology of a finite group $H$ with coefficients in $\pi_tX$. There are maps of spectral sequences
\[ \xymatrix{ 
H^s(H, \pi_tX) \ar[r]  \ar@{=>}[d] &  \widehat{H}^{s} (H, \pi_tX) \ar[r]   \ar@{=>}[d] &  H_{-s-1}(H, \pi_tX)   \ar@{=>}[d] \\ 
\pi_{t-s}X^{hH} \ar[r] & \pi_{t-s}X^{tH} \ar[r] & \pi_{t-s-1}X_{hH}
} \]
such that the first map is multiplicative (see the discussion in \cite[Part II]{GreenleesMay}, in particular, (9.9) and Theorem 10.3). 
The maps in the top row of the diagram (i.e., the maps between $E_2$--pages) induce isomorphisms
\begin{align*}
\widehat{H}^{s} (H, \pi_tX) \cong  \begin{cases} H^{s} (H, \pi_tX) & s\geq 1 \\
H_{-s-1} (H, \pi_tX) & s \leq  -2.\\
\end{cases}
\end{align*} 
The map of spectral sequences
\[H^*(H, \pi_tX) \to \widehat{H}^*(H, \pi_tX)\]
is thus an isomorphism on $E_2$--pages for $*>0$ and there is an exact sequence
\[ 0 \to \widehat{H}^{-1}(H,\pi_tX) \to H_0(H, \pi_tX) \xra{N} H^0(H,  \pi_tX) \to \widehat{H}^0(H, \pi_tX) \to 0 \]
where $N$ is the algebraic norm map. For degree reasons, the image of $N$ consists of permanent cycles, so the differentials in the homotopy fixed point spectral sequence (HFPSS) are completely determined by those in the Tate spectral sequence (TSS). Further, at least in the case of groups with periodic cohomology if not more generally, the differentials in the HFPSS force all the differentials in the TSS, which in turn determines the differentials in the homotopy orbit spectral sequence (HOSS).

Depicting this in the $(t-s,s)$ plane, the HFPSS is in the first two quadrants, the TSS in all four, and the HOSS in the last two quadrants, and interference between the HFPSS and HOSS happens only along the horizontal axis, via the algebraic norm map.
\end{rem}

\subsection{Geometric Boundary}
We require an analogue of the geometric boundary theorem for the homotopy fixed point spectral sequence of a finite group. Such results are often stated in the context of generalized Adams spectral sequences (see \cite[Proposition 2.3.4]{ravgreen}). However, the most general statement in this vein that appears in the literature is \cite[Appendix A]{behrens_EHP} and \fullref{lem:gbt} is an application of Behrens's treatment.

\begin{lem}\label{lem:gbt}
Let $H$ be a finite group and suppose that $\Sigma^{-1}\mathcal{Z} \xra{j} \mathcal{X} \xra{i} \mathcal{Y} \xra{p} \mathcal{Z} $ is a fiber sequence of $H$--equivariant spectra which induces a short exact sequence
\begin{equation*}
\xymatrix{0 \ar[r] & \pi_*\mathcal{X} \ar[r] & \pi_*\mathcal{Y} \ar[r] & \pi_*\mathcal{Z} \ar[r] & 0}
\end{equation*}
on homotopy groups. Let $\ssE_r^{s,t}(\mathcal{X})$, $\ssE_r^{s,t}(\mathcal{Y})$, and $\ssE_r^{s,t}(\mathcal{Z})$ be the associated HFPSSs.
Suppose that $H^s(H,\pi_*\mathcal{Y}) = 0$ for $s>0$. Then there are inductively defined maps
\[
\xymatrix{\delta_r\colon \ssE_r^{s,t}(\mathcal{Z}) \ar[r] & \ssE_r^{s+1,t}(\mathcal{X})}
\]
induced by the connecting homomorphism $\delta_2 \colon H^s(H, \pi_t\mathcal{Z}) \to H^{s+1}(H, \pi_t\mathcal{X})$, such that 
\[d_r\delta_r = -\delta_rd_r.\] 
The maps $\delta_r$ are isomorphisms if $s>0$ and $\delta_{\infty}$ is a filtered version of $ j_* \colon \pi_*\mathcal{Z}^{hH} \to \pi_{*-1}\mathcal{X}^{hH}$. 
\end{lem}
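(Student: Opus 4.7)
The strategy is to apply the functor $F_H(EH_+^{(\bullet)},-)$ to the entire fiber sequence $\Sigma^{-1}Z \to X \to Y \to Z$. Since this functor preserves fiber sequences level-wise, we obtain three compatible filtered spectra and hence three exact couples, together with morphisms between them. These produce three HFPSSs converging to $\pi_*X^{hH}$, $\pi_*Y^{hH}$, $\pi_*Z^{hH}$, respectively, and maps of spectral sequences $\ssE_r(X) \to \ssE_r(Y) \to \ssE_r(Z)$ that refine the long exact sequence on homotopy fixed points. The map $\delta_r$ will be constructed as a spectral-sequence refinement of the connecting homomorphism $\delta\colon \pi_*Z^{hH} \to \pi_{*-1}X^{hH}$ attached to this long exact sequence.

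\textbf{Construction at $\ssE_2$.} On $E_2$-pages, each spectral sequence is group cohomology. The short exact sequence $0 \to \pi_*X \to \pi_*Y \to \pi_*Z \to 0$ induces the usual long exact sequence
\[
\cdots \to H^s(H,\pi_tY) \to H^s(H,\pi_tZ) \xra{\delta_2} H^{s+1}(H,\pi_tX) \to H^{s+1}(H,\pi_tY) \to \cdots,
\]
so the assumption $H^s(H,\pi_*Y)=0$ for $s>0$ immediately yields $\delta_2\colon \ssE_2^{s,t}(Z) \xra{\cong} \ssE_2^{s+1,t}(X)$ for $s\geq 1$, and surjectivity for $s=0$ (after quotienting by the image of $\pi_*Y^H$).

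\textbf{Propagation through the pages.} Proceed by induction on $r$. Assume $\delta_r$ has been constructed with the anticommutation $d_r\delta_r = -\delta_r d_r$ and is an isomorphism for $s\geq 1$. Because the vanishing assumption forces $\ssE_r^{s,t}(Y)=0$ for all $r$ and $s>0$, the morphisms of spectral sequences $\ssE_r(X) \to \ssE_r(Y) \to \ssE_r(Z)$ remain ``LES-like'' at each page in that range; equivalently, the maps of derived exact couples inherit a connecting homomorphism. Concretely, if $z \in \ssE_r^{s,t}(Z)$ with $d_rz=0$, then $d_r\delta_rz = -\delta_rd_rz=0$, so $\delta_rz$ is a cycle in $\ssE_r^{s+1,t}(X)$; if $z = d_rw$, then $\delta_rz = -d_r\delta_{r}w$ (after adjusting $w$ by an element pulled back from $\ssE_r(Y)$, which vanishes in positive filtration). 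Thus $\delta_r$ descends to $\delta_{r+1}$, and the anticommutation with $d_{r+1}$ follows from the standard sign in the derived exact couple, exactly as in the Adams-theoretic geometric boundary theorem \cite[Lemma A.4.1]{behrens_EHP}.

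\textbf{Infinity and conclusion.} Passing to $E_\infty$, the vanishing hypothesis ensures that $\ssE_\infty^{s,t}(Y)=0$ for $s>0$, so the filtration on $\pi_*Y^{hH}$ is concentrated in filtration zero, and the long exact sequence on homotopy fixed points identifies $j_*$ on positive-filtration pieces with $\delta_\infty$, as asserted. The main obstacle is the verification of the anticommutation $d_r\delta_r = -\delta_r d_r$ and the inductive descent to derived pages; this is essentially a bookkeeping exercise using the exact couple structure and signs in the cofiber sequence, and can be carried out either by the explicit zig-zag in the filtered spectra (using the octahedral axiom applied to the maps $F_H(EH_+^{(s)}, X \to Y \to Z)$) or by invoking the generalized geometric boundary theorem in its Adams-spectral-sequence formulation and specializing to the trivial Adams resolution over $F_H(EH_+^{(\bullet)}, -)$.
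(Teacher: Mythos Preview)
Your proposal is correct and follows essentially the same route as the paper: both define $\delta_2$ as the connecting homomorphism in group cohomology arising from the short exact sequence on homotopy groups, and then appeal to Behrens' geometric boundary theorem \cite[Lemma A.4.1]{behrens_EHP} to obtain the anticommutation $d_r\delta_r = -\delta_r d_r$ and the inductive descent to later pages. The paper is terser---it simply notes that the vanishing hypothesis on $H^s(H,\pi_*Y)$ forces one into Case~3 of that lemma, and then cites Case~5 and \cite[Lemma A.4.6]{behrens_EHP} for the identification of $\delta_\infty$ with $j_*$---whereas you sketch more of the exact-couple bookkeeping, but the underlying argument is the same.
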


\begin{rem}
Similar statements hold for the TSS and, respectively, the HOSS, provided that $\widehat{H}^s(H,\pi_*\mathcal{Y}) = 0$ for all $s$ and, respectively, that ${H}_s(H,\pi_*\mathcal{Y}) = 0$ for all $s<0$.
\end{rem}

The proof of \fullref{lem:gbt} is technical. It relies on a careful reading of \cite[Appendix A]{behrens_EHP} and, in particular, of the proof of Lemma A.4.1 in loc.cit. Understanding the remainder of the paper does not necessitate the understanding of the details of this particular proof, so the reader unenthusiastic about gory spectral sequence details can safely skip to the beginning of the next section if they so prefer.
\begin{proof}[Proof of \fullref{lem:gbt}]
In this argument, we assume that the reader is familiar with the notation and techniques of \cite[Appendix A]{behrens_EHP}. Let $X=\mathcal{X}^{hH}$ and
\[X_{\mu} =\begin{cases}
 F_H(EH^{(-\mu)}_+, \mathcal{X}) &  \mu\leq 0 \\
 \ast & \mu >0.
 \end{cases} \]
 Then $X_{\mu}$ is a tower under $X$ and
 \[X_{\mu}^{\mu} =  F_H(EH^{(-\mu)}_+/EH^{(-(\mu+1))}_+, \mathcal{X}) = F_{-\mu}. \]
 The spectral sequence of  \cite[Appendix A.3]{behrens_EHP} is a reindexing of the HFPSS
 \[E^r_{t, \mu}(X) \cong \ssE^{-\mu, t-\mu}_r(X) \]
 with differentials
 \[d_{\alpha}^{{X}} \colon E^\alpha_{t, \mu}(X)  \to E^\alpha_{t-1, \mu-\alpha}(X).    \]
We use the same notation for $Y = \mathcal{Y}^{hH}$ and $Z = \mathcal{Z}^{hH}$.

In particular, for any $X$ we have that $E^\alpha_{t, \mu}(X)=0$ if $\mu>0$. By our assumptions on $Y$, $E^2_{t,\mu}(Y) =0 $ for any $\mu \neq 0$ and 
there are exact sequences
\begin{align}\label{eqn:se}
 0 \to E_{t,\mu}^{1}(X) \xra{i_*}   E_{t,\mu}^1(Y)   \xra{p_*}   E_{t,\mu}^{1}(Z) \to 0.
 \end{align}
That is, $j_* \colon E_{t+1,\mu}^{1}(Z)  \to E_{t,\mu}^{1}(X) $ is the zero map.
We let
\[\delta_2\colon E_{t,\mu}^{2}(Z) \to E_{t-1,\mu-1}^{2}(X) \]
be the associated connecting homomorphism, which is identified with the usual connecting homomorphism in group cohomology. Assume by induction that $\delta_{\beta}$ has been defined on the $E^{\beta}$--term and is the map induced by $\delta_2$.
Suppose that there is a non-trivial differential $d_{\beta}^{Z}(w)=z$
for $w \in E_{t,\mu}^{\beta}(Z)$ and $z \in E_{t-1, \mu-\beta}^{\beta}(Z)$. We will use \cite[Lemma A.4.1]{behrens_EHP} to conclude that
\[d_{\beta}^{X}(\delta_{\beta}(w)) =- \delta_{\beta}(d_{\beta}^{Z}(w))\]
as follows.

\begin{figure}[t]
\center
\includegraphics[width=0.8\linewidth]{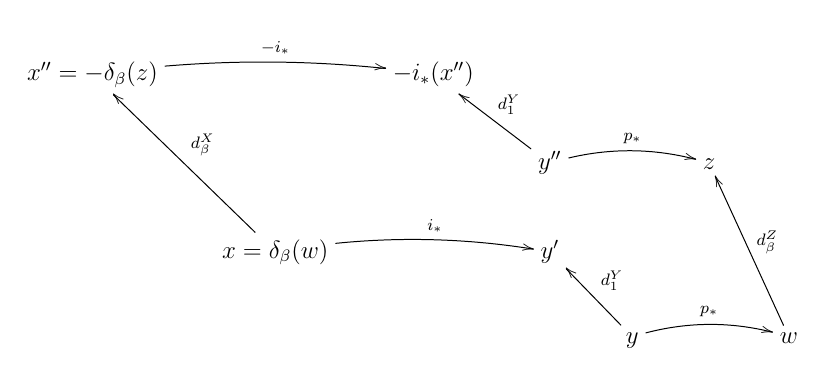}
\captionsetup{width=\textwidth}
\caption{Illustration of case (3) of \cite[Lemma A.4.1]{behrens_EHP}. 
}
\label{fig:gbt}
\end{figure}

Abusing notation, we choose representatives $w \in E_{t,\mu}^1(Z)$ and  $z \in E_{t-1, \mu-\beta}^{1}(Z)$. Since \eqref{eqn:se} is exact, there exists $y \in E_{t,\mu}^1(Y)$ such that $w=p_*(y)$. Let $y'= d_1^{Y}(y)$ and note that $y'\neq 0$. Otherwise, $y$ would be a permanent cycle since $E_{*,*}^*(Y)$ collapses at $E_2$, which would imply that $d_{\beta}^{Y}(y)=0$. This would contradict the fact that $d_{\beta}^Z(p_*(y)) =d_{\beta}^Z(w) \neq0$.

We will apply \cite[Lemma A.4.1]{behrens_EHP} to the differential $d_1^Y(y)=y'$. A step by step analysis of the proof shows that we are in Case~3 of \cite[Lemma A.4.1]{behrens_EHP}. We do not repeat the argument here as our treatment would be no simpler than the proof of \cite[Lemma A.4.1]{behrens_EHP}. In our case, we have $\alpha=\alpha''=1$, $\alpha'=\beta-1$, $x= \delta_\beta(w)$, $x'' = -\delta_{\beta}(z)$ where $\alpha,\alpha', \alpha''$, $x$ and $x''$ are as in the statement of Behrens' result. These elements are illustrated in \fullref{fig:gbt}.

Since $\delta_{\beta}$ anti-commutes with the differential $d_{\beta}$, it induces a map on the $E^{\beta+1}$--terms which we denote by $\delta_{\beta+1}$. 

Finally, we use Lemma A.4.1 again to prove that $\delta_{\infty}$ is a filtered version of $j_* \colon \pi_*Z \to \pi_{*-1}X$. 
Let $\overline{z} \in \pi_tZ$. Choose $z \in E^1_{t,\mu}(Z)$ which detects $\overline{z}$. Then there exists $y \in E^1_{t,\mu}(Y)$ such that $p_*(y) =z$. If $d_1^Y(y) =0$, then
\begin{itemize}
\item $y$ is a permanent cycle which represents a class $\overline{y} \in \pi_tY$ such that $p_*(\overline{y}) =\overline{z}$. Hence, $j_*(\overline{z}) =0$. 
\item $\delta_2(z)=0$ by the definition of the connecting homomorphism. Hence, $\delta_{\infty}(\overline{z})=0$.
\end{itemize}
In this case, $j_*(\overline{z}) = 0$ and $\delta_{\infty}(\overline{z}) = 0$, so the claim holds. So, suppose that $d_1^Y(y) = y' \neq 0$. We can apply Lemma A.4.1 to this differential, with $\alpha=1$. Since $p_*(\overline{y}) =\overline{z}$ is a permanent cycle, we are in Case~5, under the assumption that $p_*(y)$ detects $\overline{z}$. Then, there is a non-trivial element $x \in E^1_{t-1, \mu-1}$ such that $i_*(x) = y'$ and, by definition, $\delta_2(z) = x$. Further, $j_*(\overline{z})$ is detected by $x$, or it is detected in lower filtration if $x$ is the target of a differential in $E_{*,*}^*(X)$.
\end{proof}

\section{The monochromatic layer}\label{sec:mono}

In this section we give a description of the Tate spectral sequence of $\mono \E = M_nE_n$ in terms of the one for $\E$, which will be one of the key ingredients in the proof of \fullref{thm:main}.

Recall that 
\[\E_* = \W [\![ u_1, \ldots, u_{n-1} ]\!][u^{\pm1}] \]
for $\W = W(\F_{p^n})$ the Witt vectors on $\F_{p^n}$, $u_i \in \E_0$ for $1\leq i \leq n$ and $u \in \E_{-2}$. Further, we let 
\[v_k =
\begin{cases}  p & k=0 \\
u_ku^{1-p^k} & 1 \leq k <n \\
u^{1-p^n} & k=n. 
\end{cases}\]
Let $I_k$ be the ideal $(p,v_1,\ldots,v_{k-1})$ in $\E_*$, with the convention that $I_0$ is the zero ideal. Note that $v_k$ is invariant modulo $I_k$.
As in \cite[Section 5]{ravloc} or \cite{StrickGrossHop}, consider the cofiber sequences
\[\E/I_k^{\infty} \to v_k^{-1}\E/I_k^{\infty} \to \E/I_{k+1}^{\infty}.\]
One deduces from the fact that $\mono (v_k^{-1}\E/I_k^{\infty} ) \simeq \ast$ for $k<n$ that
\[\mono \E \simeq \Sigma^{-n}\mono (\E/I_{n}^{\infty}) \simeq \Sigma^{-n} \E/I_n^{\infty}. \] 
Recall that $\G_n$ acts on $\mono \E \simeq \E \smsh \mono S$ via its natural action on $\E$. Our goal in this section is to understand the cohomology of $C_p$ with coefficients $ \E_*/I_k^{\infty} $ in terms of its cohomology with coefficients in $\E_*/I_{k-1}^{\infty}$. To this end, we recall the following theorem, which is an adaptation of unpublished work of Hopkins and Miller.

\begin{notation}
Recall that $n=p-1$ and let $\zeta$ be a generator of $C_p$. For $0\leq k \leq n$, let $U_{k}$ be the $C_p$--module $\F_{p^n}\{z_n, z_{n-1}, \ldots, z_{k}\}$ 
with $\F_{p^n}$--linear action determined by $\zeta(z_i) = z_i + z_{i-1}$ for $k<i \leq n$ and $\zeta(z_{k}) =z_{k}$. Let $S_*(U_k)$ be the symmetric algebra on $U_k$ over $\F_{p^n}$ and let $d \in S_{p}(U_k)$ be given by $ d=\prod_{g\in C_p}g(z_n)$.
\end{notation}

\begin{thm}[Hopkins--Miller]\label{thm:getheory}
Let $\mathfrak{m}$ be the image of $I_n$ in $\E_{*}/I_k$. 
For $0<k\le n$, there is a map $\varphi\colon U_{k} \to \E_{-2}/I_k$ and $c_i \in \W^{\times}$ such that $\varphi(z_n) \equiv c_n u \mod (p, \mathfrak{m}^2)$ and $\varphi(z_i) \equiv c_iu u_i \mod (I_i, \mathfrak{m}^2)$ for $ k \leq i < n $.
The map $\varphi$ induces a $C_p$--equivariant map of algebras
\begin{equation*}
S_*(U_{k})[ d^{-1}] \to \E_{*}/I_k
\end{equation*} 
which becomes an isomorphism upon completion at $I = \varphi^{-1}(\mathfrak{m})$.  
\end{thm}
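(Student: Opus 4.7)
The plan is to proceed by Lubin--Tate deformation theory combined with a Nakayama-style comparison of complete local rings. The underlying geometric fact is that at height $n=p-1$, the subgroup $C_p \subset \mathbb{S}_n$ is generated by an element $\zeta$ which corresponds to a primitive $p$-th root of unity $\zeta_p$ under the embedding $\Z_p[\zeta_p] \hookrightarrow \mathcal{O}_{D_n}$ into the maximal order of the relevant division algebra. Since $\zeta$ has order exactly $p$, its action on the deformation space $\Spf(E_0)$ is computable, and we exploit the fact that $\zeta \equiv 1 \pmod{\pi}$ (where $\pi$ is a uniformizer of $\Z_p[\zeta_p]$) to linearize the action modulo the square of the maximal ideal.

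Concretely, I would first construct $\varphi$ on generators by declaring $\varphi(z_n)=c_n u$ and $\varphi(z_i)=c_i u u_i$ for $k \leq i < n$, where the units $c_i \in \W^\times$ are to be determined by the equivariance requirement; the map then extends uniquely to a (graded) $\F_{p^n}$-algebra map $S_*(U_k)[d^{-1}] \to E_*/I_k$ by the universal property of the symmetric algebra, once $d$ has been checked to map to a unit (which is immediate from $\varphi(d) = \prod_{g\in C_p} g(c_n u)$, whose leading term is a product of units times a power of $u$). For the $C_p$-equivariance, the crux is to verify on generators that
\[
\zeta\cdot\varphi(z_i) \equiv \varphi(z_i) + \varphi(z_{i-1}) \pmod{(I_i,\mathfrak{m}^2)}
\]
for $k < i \leq n$, with $\zeta\cdot \varphi(z_k)\equiv \varphi(z_k)$. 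This amounts to computing the first-order behavior of $\zeta$ on $u$ and the $u_i$; by a standard deformation theory argument (invariant differentials for $u$, functoriality of the universal deformation for the $u_i$), one obtains expressions of the form $\zeta(u)\equiv u + \alpha_{n-1}\, u u_{n-1} \pmod{\mathfrak{m}^2}$ and analogous relations on the $u_i$, with unit coefficients depending on the normalization of $u$. The constants $c_i$ are then chosen inductively (starting from $c_n$) to absorb these coefficients and match the shift action $\zeta(z_i)=z_i+z_{i-1}$.

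For the isomorphism after completion at $I=\varphi^{-1}(\mathfrak{m})$, I would observe that both $(S_*(U_k)[d^{-1}])^\wedge_I$ and $E_*/I_k$ are complete Noetherian local rings with residue field $\F_{p^n}$ (the residue field match is automatic since $u,z_n$ and their orbits become units, and all $z_i, u_i$ with $i<n$ lie in the maximal ideal). The map $\widehat{\varphi}$ is a local homomorphism sending $z_k,\ldots,z_{n-1}$ to $c_k u u_k,\ldots, c_{n-1} u u_{n-1}$, which is precisely a regular system of parameters for $E_*/I_k$ on the nose. A dimension count shows both rings have Krull dimension $n-k$ over the degree-$0$ part, and the induced map on associated graded of the $I$-adic (resp.\ $\mathfrak m$-adic) filtrations is an iso by the first-order calculation above; the topological Nakayama lemma for complete local rings then upgrades this to an isomorphism of completions.

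The principal obstacle is the first-order computation of the $C_p$-action: one must pin down the coefficients of the linearization of $\zeta$ on $u$ and the $u_i$ modulo $\mathfrak{m}^2$. This is precisely the content of the unpublished Hopkins--Miller work and is delicate because it requires fixing compatible choices of uniformizer in $\Z_p[\zeta_p]$, of coordinate on the formal group, and of the Lubin--Tate parameters $u_i$; once these choices are made coherently, the shift pattern $\zeta(z_i)=z_i+z_{i-1}$ is forced by the fact that $\zeta-1$ has $\pi$-adic valuation $1$, so repeated application of $\zeta-1$ decreases the ``height filtration'' encoded by the $u_i$ step by step until one reaches the $C_p$-invariant element $z_k$.
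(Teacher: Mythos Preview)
The paper does not actually prove this theorem; it attributes the result to Hopkins--Miller and refers to Nave~\cite[Thm.~2.1]{nave1} for the case $k=0$ (with the maximal finite subgroup), noting that the $C_p$ case is similar and that $k\geq 1$ follows easily. So there is no detailed argument in the paper to compare against, and your sketch is in the spirit of what Nave does.

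That said, there is a genuine gap in your construction of $\varphi$. You propose to \emph{declare} $\varphi(z_n)=c_n u$ and $\varphi(z_i)=c_i u u_i$ on the nose, and then verify that
\[
\zeta\cdot\varphi(z_i) \equiv \varphi(z_i) + \varphi(z_{i-1}) \pmod{(I_i,\mathfrak{m}^2)}.
\]
But a congruence modulo $(I_i,\mathfrak{m}^2)$ is not enough: the theorem asserts that the induced algebra map is \emph{exactly} $C_p$--equivariant, and no choice of scalars $c_i$ will make $\zeta(c_i u u_i) - c_i u u_i$ equal to $c_{i-1} u u_{i-1}$ on the nose, since the action of $\zeta$ on $u u_i$ has higher-order terms. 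Your inductive absorption of coefficients into the $c_i$ only fixes the linear part.

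The standard fix (and what Nave does) reverses the logic: set $\varphi(z_n)$ to be a chosen element (e.g.\ a unit multiple of $u$), and then \emph{define}
\[
\varphi(z_{i-1}) := (\zeta - 1)\,\varphi(z_i)
\]
inductively. Since the $C_p$--module structure on $U_k$ is precisely $(\zeta-1)z_i = z_{i-1}$, this makes $\varphi$ equivariant by construction. One then \emph{computes} that $(\zeta-1)^{n-i}u \equiv c_i u u_i \pmod{(I_i,\mathfrak{m}^2)}$ from the first-order description of the $\zeta$--action you allude to; the congruences in the statement are outputs, not inputs. With $\varphi$ built this way, your Nakayama/associated-graded argument for the completion goes through as you describe.
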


\begin{rem}
For a proof in the case of a maximal finite subgroup, see for example Nave's paper~\cite[Thm.~2.1]{nave1} and $k=0$. A similar choice of coordinates works for $C_p$, and the result for $k\ge 1$ follows easily from this. 
\end{rem}

The next result is the key input that fuels our group cohomology computations. 

\begin{lem}\label{lem:e0}
Let $H$ be $G$, $F$ (as defined in \eqref{eqn:G}) or $C_p$.
For $0 \leq k \leq n-1$ and $s >0$,
\[v_k^{k+1} \cdot H^s(H, \E_*/I_k) =0 \]
where $v_k = u_ku^{1-p^k} \in H^0(H, \E_*/I_k)$.
\end{lem}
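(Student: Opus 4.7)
The plan is to first reduce to the case $H = C_p$ and then compute the relevant $C_p$-cohomology using \fullref{thm:getheory}. For the reduction, since the indices $[F:C_p] = n^2$ and $[G:C_p] = n^3$ are coprime to $p$, restriction induces a split injection $H^s(H, \E_*/I_k) \hookrightarrow H^s(C_p, \E_*/I_k)^{H/C_p}$. Because $v_k$ is $H$-invariant modulo $I_k$, multiplication by $v_k$ commutes with the residual $H/C_p$-action, so it suffices to show $v_k^{k+1}\cdot H^s(C_p, \E_*/I_k) = 0$ for $s>0$.

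By \fullref{thm:getheory}, $\E_*/I_k$ is $C_p$-equivariantly isomorphic (after completion at $I$) to $S_*(U_k)[d^{-1}]$, with $v_k$ corresponding up to a unit to $z_k \cdot z_n^{1-p^k}$ and with $z_n$ invertible. Since $C_p$ is cyclic, positive-degree group cohomology agrees with the $2$-periodic Tate cohomology
\begin{equation*}
\widehat H^{\mathrm{ev}}(C_p, M) = M^{C_p}/N(M), \qquad \widehat H^{\mathrm{od}}(C_p, M) = \ker(N)/(\zeta-1)(M),
\end{equation*}
where $N = (\zeta - 1)^{p-1}$ in characteristic $p$. The problem thus reduces to showing $z_k^{k+1}\cdot M^{C_p}\subseteq N(M)$ and $z_k^{k+1}\cdot \ker(N)\subseteq (\zeta-1)(M)$.

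The key computational input is the Frobenius identity $\zeta(z_n^{p^k}) = z_n^{p^k} + z_{n-1}^{p^k}$, which iterated yields $(\zeta - 1)^{n-k}(z_n^{p^k}) = z_k^{p^k}$ and more generally describes the $C_p$-action on the subring generated by $p^k$-th powers. Using this, one exhibits an explicit element $y \in S_*(U_k)$ of the form $z_j^{k+1}$ for a suitable index $j \geq k+1$, chosen so that the smallest $d$ with $(\zeta-1)^d y = 0$ satisfies $d \geq p$, and then computes that $N(y) = (\zeta-1)^{p-1}(y)$ has leading term a unit multiple of $z_k^{k+1}$, with corrections lying in $\mathfrak m \cdot M^{C_p}$. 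A parallel explicit construction (exploiting the Leibniz-style expansion of $\zeta - 1$ on products $z_k^{a} z_{k+1}$) exhibits $z_k^{k+1}$ as a $(\zeta-1)$-boundary modulo $\mathfrak m \cdot \ker(N)$. Since $\E_*/I_k$ is $\mathfrak m$-complete by \fullref{thm:getheory}, a Nakayama-style iteration promotes these approximate inclusions to exact ones, yielding the desired annihilation.

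The main obstacle is the combinatorial identification of $N(y)$ with a unit multiple of $z_k^{k+1}$: one must choose $j$ so that $z_j^{k+1}$ has ``depth'' at least $p$ under $\zeta - 1$, and then verify the leading term computation via an analysis of multinomial coefficients modulo $p$. The organizing principle is the Frobenius identity, and the power $k+1$ arises because $v_k$ carries only a single factor of $z_k$: producing an element that reaches $(\zeta-1)$-depth $p$ (and hence can serve as a pre-image under $N$) requires gathering $k+1$ such factors.
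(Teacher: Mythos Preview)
Your reduction to $H = C_p$ and your invocation of \fullref{thm:getheory} to pass to the symmetric algebra $S_*(U_k)$ are correct and match the paper's opening moves. The paper also shows that $v_k$ and $z_k$ differ by an invariant unit, so the problem becomes showing $z_k^{k+1} \cdot H^s(C_p, S_*(U_k)) = 0$ for $s>0$. At this point, however, the arguments diverge.

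The paper's proof is short and structural: it quotes a result of Almkvist--Fossum \cite[Ch.~III, Cor.~2.5 and Prop.~2.10]{zpdecom} stating that $S_{pt+r}(U_k)$ is a \emph{free} $\F_p[C_p]$--module whenever $k+1 \le r \le p-1$. Thus $H^s(C_p, S_m(U_k)) = 0$ for $s>0$ unless $m \bmod p \in \{0,1,\ldots,k\}$. Since multiplication by $z_k$ shifts the internal degree by $1$, applying it $k+1$ times necessarily factors through a graded piece with vanishing higher cohomology, and the claim follows.

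Your proposed route, by contrast, tries to exhibit $z_k^{k+1}$ explicitly as a norm via some $y = z_j^{k+1}$ together with a Nakayama iteration. This has genuine gaps:
\begin{itemize}
\item The choice of $j$ and the computation of $N(z_j^{k+1})$ are never carried out. For instance $j = k+1$ gives $z_{k+1}^{k+1}$ of $(\zeta-1)$--depth only $k+2$, so $N(z_{k+1}^{k+1}) = 0$ whenever $k < n-1$; no alternative $j$ is specified or checked.
\item The ``corrections in $\mathfrak m \cdot M^{C_p}$'' and the Nakayama step conflate the polynomial grading on $S_*(U_k)$ with the $\mathfrak m$--adic filtration on $\E_*/I_k$. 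In the graded piece $S_{k+1}(U_k)$ the element $N(y)$ lands exactly, with no higher--order corrections; and $z_k^{k+1}$ itself already lies in $\mathfrak m^{k+1}$, so it is unclear what the iteration is approximating.
\item The separate treatment of odd Tate cohomology is unnecessary: once $z_k^{k+1}$ is a norm it represents $0$ in $\widehat H^0$, and the ring structure on $\widehat H^*(C_p, S_*(U_k))$ then forces it to annihilate all of $\widehat H^*$.
\item The case $k=0$ (where $v_0 = p$ and the claim is immediate since $p$ kills $H^s(C_p,-)$ for $s>0$) is not singled out, though your general argument does not cover it.
\end{itemize}
The freeness result from \cite{zpdecom} bypasses all of these issues in one stroke; you should invoke it rather than attempt the explicit construction.
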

\begin{proof}
The claims for $G$ and $F$ follow from the claim for $C_p$ by taking $G/C_p$ and $F/C_p$--fixed points since both quotients have order coprime to $p$. Further, if $k=0$, the claim follows from the fact that $pH^s(C_p, \E_*)=0$ if $s>0$; so we assume that $k\geq 1$. From \fullref{thm:getheory} we obtain
\[ 
H^*(C_p,  \E_{*}/I_k ) \cong (H^*(C_p,S_*(U_{k}))[d^{-1}])^{\wedge}_{I}.
\]
Note that the localization and completion can be moved outside, because $d$ and $I$ are invariant and our coefficients are finitely generated over $\E_*$. Moreover,
\[
\varphi(z_k) \equiv c_kv_ku^{p^k} \equiv c v_k \varphi(d)^{p^{k-1}} \mod (I_k, \mathfrak{m}^2)
\]
for some $c \in \W^{\times}$. Hence, $\varphi(z_k)$ and $v_k$ differ by a unit.
Therefore, it suffices to show that 
\[
z_k^{k+1}  H^s(C_p, S_*(U_k)) =0 
\]
for $s>0$.

For $t\geq 0$, $k+1\leq r \leq p-1$ and $s>0$, we show below that $H^s(C_p,S_{pt+r}(U_k))=0$.\footnote{In fact, it follows from Corollary 2.5 and Proposition 2.10 of \cite[Ch. III]{zpdecom} that for $t\geq 0$ and $k+1\leq r \leq p-1$,
$S_{pt+r}(U_k)$ is a free $\F_p[C_p]$--module.} 
Since multiplication by $z_k$ is a degree shifting map
\[
z_k\colon H^s(C_p, S_*(U_k)) \to  H^s(C_p, S_{*+1}(U_k)),   
\]
it follows that $z_k^{k+1}H^s(C_p, S_*(U_k)) =0$ for $s >0$, as desired.

The referee points out the following self-contained proof of the vanishing of $H^s(C_p,S_{pt+r}(U_k))$ in the cases stated above. Since $U_0$ is the regular representation of $C_p$ over $\F_{p^n}$, $H^0(C_p,S_*(U_0))$ is concentrated in degrees divisible by $p$ and $S_l(U_0)$ is free over $\F_{p^n}[C_p]$ whenever $p$ does not divide $l$. This implies that $H^s(C_p,S_l(U_0))=0$ for all $s>0$ and $l$ not divisible by $p$. For $k>0$, using the long exact sequences on cohomology associated to the short exact sequences 
\[
0 \to S_*(U_{k-1}) \xra{\cdot z_{k-1}} S_*(U_{k-1}) \to S_*(U_k) \to 0, 
\]
it follows that $H^s(C_p,S_{pt+r}(U_k)) = 0$ for all $s>0$, $t\ge 0$, and $k+1 \le r \le p-1$. 
\end{proof}

\begin{lem}\label{lem:vanishingMk}
Let $H$ be $G$, $F$ (as defined in \eqref{eqn:G}) or $C_p$.
For $0 \leq k \leq n-1$ and any $\E_*\E$--comodule $A$ which is $I_{k}$--power-torsion,
\[
\widehat{H}^*(H, v_{k}^{-1}A) = 0.
\]
In particular, this holds for $A = \E_*/I_k^{\infty}$.
\end{lem}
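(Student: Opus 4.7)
The plan is to proceed by devissage, first reducing to the case where $A$ is annihilated by $I_k$, and then exploiting the module structure of $v_k^{-1}A$ over the ring $R := v_k^{-1}(\E_*/I_k)$, whose Tate cohomology I will show to vanish.

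For the reduction, consider the decreasing filtration $A \supseteq I_k A \supseteq I_k^2 A \supseteq \cdots$. Since $A$ is $I_k$--power-torsion and Tate cohomology commutes with filtered colimits, we may assume $I_k^N A = 0$ for some finite $N$. By induction on $N$ using the long exact sequences in Tate cohomology arising from $0 \to I_k^{j+1}A \to I_k^j A \to I_k^j A/I_k^{j+1}A \to 0$, the problem reduces to the case where $I_k A = 0$, since each successive quotient is an $\E_*/I_k$--comodule.

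Next I would show $\widehat{H}^*(H, R) = 0$. For $k \geq 1$, \fullref{lem:e0} gives that $v_k^{k+1}$ annihilates $H^s(H, \E_*/I_k)$ for $s > 0$, so $H^s(H, R) = v_k^{-1}H^s(H, \E_*/I_k) = 0$ in positive degrees. Since Tate cohomology of $C_p$ is $2$--periodic and agrees with ordinary cohomology in positive degrees, this gives $\widehat{H}^*(C_p, R) = 0$ in all degrees. For $H = F$ or $G$, I would use that $F/C_p$ and $G/C_p$ have order prime to $p$ together with the fact that $R$ is $p$--torsion (for $k \geq 1$) to descend: $\widehat{H}^*(H, R) \cong \widehat{H}^*(C_p, R)^{H/C_p} = 0$, analogously to the coprime-order argument used in the proof of \fullref{lem:e0}. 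The case $k = 0$ is handled directly, since $R = p^{-1}\E_*$ makes $|H|$ a unit, and Tate cohomology of a finite group is annihilated by the group order, forcing $\widehat{H}^*(H, R) = 0$.

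Finally, since $v_k^{-1}A$ is an $R$--module, $\widehat{H}^*(H, v_k^{-1}A)$ becomes a graded module over $\widehat{H}^*(H, R)$, and the unit class $[1] \in \widehat{H}^0(H, R)$ acts as the identity endomorphism. Because $\widehat{H}^0(H, R) = 0$ forces $[1] = 0$, the identity map on $\widehat{H}^*(H, v_k^{-1}A)$ is the zero map, yielding the desired vanishing. The subtlest step I anticipate is the reduction in the second paragraph: one must verify that each $I_k^j A$ is genuinely an $\E_*\E$--subcomodule (which relies on $I_k$ being an invariant ideal), so that the devissage is functorial for Tate cohomology of comodules and the module argument in the last step applies comodule-by-comodule throughout.
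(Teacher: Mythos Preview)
Your proof is correct and takes a genuinely different route from the paper's. The paper argues via the Landweber filtration theorem: it defines the class $\cE$ of $I_k$--torsion comodules $A$ with $v_k^{-1}H^s(C_p,A)=0$ for $s>0$, observes $\cE$ is closed under shifts, extensions and filtered colimits, and then invokes Landweber filtration to build an arbitrary $I_k$--torsion comodule from the cyclic comodules $\E_*/I_{k'}$ with $k'\ge k$ (the case $k'=k$ being \fullref{lem:e0} and $k'>k$ being trivial since $v_k\in I_{k'}$). Your argument instead runs an $I_k$--adic d\'evissage to reduce to modules killed by $I_k$, and then kills everything at once by the ring--module trick: $\widehat{H}^*(H,v_k^{-1}A)$ is a module over $\widehat{H}^*(H,R)$ with $R=v_k^{-1}(\E_*/I_k)$, and since the latter ring vanishes its unit is zero.

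Your approach is more elementary in that it avoids the Landweber filtration theorem entirely; the only structural input beyond \fullref{lem:e0} is the multiplicativity of Tate cohomology. The paper's approach, by contrast, stays within the comodule world and makes the reduction to the basic building blocks $\E_*/I_{k'}$ explicit.

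One small comment on your closing worry: you do not actually need $I_k^jA$ to be an $\E_*\E$--subcomodule. All that the d\'evissage and the module argument require is that $I_k^jA$ be an $H$--stable $\E_*$--submodule, which follows because $I_k$ is $\G$--invariant; the comodule structure plays no further role once you have the $H$--action and the $R$--module structure on the associated graded. So that step is less delicate than you suggest.
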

\begin{proof}
Again, we prove the claim for $C_p$ and note that the claim for $G$ and $F$ then follows by taking fixed points. Further, in the case $k=0$, the claim is that $H^s(C_p, p^{-1}A)=0$ for $s>0$, which holds because $|C_p|$ is invertible in $p^{-1}A$. So we assume that $k \geq 1$.

The group $C_p$ has periodic Tate cohomology, so it is enough to prove that ${H}^s(C_p, v_k^{-1}A )$ is zero for $s>0$. 
Let $\cE$ be the collection of those $I_k$--torsion $\E_*\E$--comodules $A$ for which $v_k^{-1}H^s(C_p,A) \cong H^s(C_p,v_k^{-1}A)$ vanishes for all $s>0$. On the one hand, $\cE$ is closed under internal shifts, extensions, and filtered colimits, since taking $C_p$--group cohomology commutes with filtered colimits. 
On the other hand, every $\E_*\E$--comodule can be written as a union over its finitely generated subcomodules. Moreover, if $A$ is $I_k$--power-torsion, so are its subcomodules. According to the Landweber filtration theorem, every finitely generated $I_k$--power-torsion $\E_*\E$--comodule is an iterated extension of cyclic comodules $\E_*/I_{k'}$ with $k' \ge k$, up to internal shifts. 

It follows that every $\E_*\E$--comodule $A$ which is $I_k$--power-torsion can be constructed from the set $\{\E_*/I_{k'}\}_{k' \ge k}$ via internal shifts, extensions, and filtered colimits. By \fullref{lem:e0}, $\E_*/I_k$ is in $\cE$, while $\E_*/I_{k'} \in \cE$ trivially for $k' > k$, so $A \in \cE$ as claimed.
\end{proof}

\begin{prop}\label{prop:hom}
Let $H$ be $G$, $F$ (as defined in \eqref{eqn:G}) or $H=C_p$.
There is an isomorphism of spectral sequences
\[
\xymatrix{ \widehat{H}^s(H, \pi_t(\mono \E)) \ar[r]^-{\cong} \ar@{=>}[d] &  \widehat{H}^{s+n}(H,\pi_{t+n}\E) \ar@{=>}[d]\\
\pi_{t-s} (\mono \E)^{tH} \ar[r]^-{\cong} & \pi_{t-s} \E^{tH}.}
\]
\end{prop}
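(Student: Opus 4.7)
The plan is to leverage the cofiber sequences $\E/I_k^\infty \to v_k^{-1}\E/I_k^\infty \to \E/I_{k+1}^\infty$ introduced earlier in the section, iterating over $k = 0, 1, \ldots, n-1$ to accumulate a shift of $n$ in the cohomological direction of the Tate spectral sequence, and then compensating by $\Sigma^{-n}$ using $\mono\E \simeq \Sigma^{-n}\E/I_n^\infty$.

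For each such $k$, applying $(-)^{tH}$ to the cofiber sequence produces a fiber sequence whose middle term $(v_k^{-1}\E/I_k^\infty)^{tH}$ is contractible, since by \fullref{lem:vanishingMk} the Tate spectral sequence of $v_k^{-1}\E/I_k^\infty$ has vanishing $E_2$-page. The connecting map then yields an equivalence $(\E/I_{k+1}^\infty)^{tH} \simeq \Sigma(\E/I_k^\infty)^{tH}$; iterating $n$ times gives $(\E/I_n^\infty)^{tH} \simeq \Sigma^n \E^{tH}$, and shifting by $\Sigma^{-n}$ yields the abutment equivalence $(\mono\E)^{tH} \simeq \E^{tH}$. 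At the $E_2$ level, since $v_k$ acts injectively on $\E_*/I_k^\infty$, the cofiber sequence induces a short exact sequence $0 \to \E_*/I_k^\infty \to v_k^{-1}\E_*/I_k^\infty \to \E_*/I_{k+1}^\infty \to 0$ on homotopy groups. Applying $\widehat{H}^*(H,-)$ and using the vanishing of the middle term produces a connecting isomorphism
\[
\widehat{H}^s(H, \pi_t\E/I_{k+1}^\infty) \xrightarrow{\cong} \widehat{H}^{s+1}(H, \pi_t\E/I_k^\infty).
\]
Composing $n$ such isomorphisms and using $\pi_t\mono\E = \pi_{t+n}\E/I_n^\infty$ gives the claimed $E_2$-page isomorphism $\widehat{H}^s(H, \pi_t\mono\E) \cong \widehat{H}^{s+n}(H, \pi_{t+n}\E)$.

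The main obstacle is upgrading each of these connecting isomorphisms at the $E_2$-level to an honest isomorphism of spectral sequences, i.e., verifying compatibility with all higher differentials. I would establish this by a Tate analogue of the inductive argument in \fullref{lem:gbt}: the total vanishing $\widehat{H}^*(H, \pi_*(v_k^{-1}\E/I_k^\infty)) = 0$ (stronger than the hypothesis imposed there) forces the connecting map to induce an isomorphism on every page of the Tate spectral sequence, while naturality of the boundary in the cofiber sequence of $H$-spectra ensures $d_r\delta = -\delta d_r$ on each page and compatibility with the abutment equivalence $(\mono\E)^{tH} \simeq \E^{tH}$. Composing the resulting isomorphisms of spectral sequences over $k = 0, \ldots, n-1$ then delivers the isomorphism stated in the proposition.
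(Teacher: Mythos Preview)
Your proposal is correct and follows essentially the same approach as the paper: both arguments iterate the chromatic cofiber sequences $\E/I_k^\infty \to v_k^{-1}\E/I_k^\infty \to \E/I_{k+1}^\infty$, invoke \fullref{lem:vanishingMk} to kill the middle term, and then use the geometric boundary machinery of \fullref{lem:gbt} to promote the connecting homomorphism to an isomorphism of spectral sequences. The only difference is cosmetic: the paper applies \fullref{lem:gbt} verbatim to the HFPSS and then passes to the TSS at the end via periodicity of Tate cohomology, whereas you work directly with the TSS throughout and appeal to a Tate analogue of \fullref{lem:gbt} (which is indeed easier here, since the hypothesis $\widehat{H}^*(H,\pi_*Y)=0$ in all degrees is stronger than the one in \fullref{lem:gbt}).
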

\begin{proof}
We can filter the map $\mono \E \to \E$ as a composite
\[
\resizebox{\textwidth}{!}{
\xymatrix{\mono \E = \Sigma^{-n}\E/I_n^{\infty} \ar[r]^-{f_n} & \Sigma^{-(n-1)}\E/I_{n-1}^{\infty}  \ar[r]^-{f_{n-1}} & \cdots \ar[r]^-{f_{2}} & \Sigma^{-1}\E/I_{1}^{\infty} \ar[r]^-{f_{1}} & \E/I_{0}^{\infty} \simeq \E.}}
\]
On homotopy groups, the fiber sequences $\Sigma^{-k}\E/I_{k-1}^{\infty} \to \fib(f_k) \to \Sigma^{-k}\E/I_{k}^{\infty}$ induce short exact sequences
\begin{equation}\label{eq:chromaticses}
\xymatrix{0 \ar[r] & \E_*/I_{k-1}^{\infty} \ar[r] & v_{k-1}^{-1}\E_*/I_{k-1}^{\infty} \ar[r] & \E_*/I_{k}^{\infty} \ar[r] & 0.}
\end{equation}
By \fullref{lem:vanishingMk}, the $H$--group cohomology of the middle term vanishes in positive degrees, so we are in the situation of \fullref{lem:gbt}. It follows that the connecting homomorphism associated to \eqref{eq:chromaticses} gives an isomorphism of $H$--homotopy fixed point spectral sequences
\[
\xymatrix{H^s(H, \pi_t(\Sigma^{-k}\E/I_{k}^{\infty})) \ar[r]_-{\cong}^-{\delta_2} \ar@{=>}[d] & H^{s+1}(H,\pi_{t+1}(\Sigma^{-(k-1)}\E/I_{k-1}^{\infty})) \ar@{=>}[d]\\
\pi_{t-s} (\Sigma^{-k}\E/I_{k}^{\infty})^{hH} \ar[r]_-{\cong} & \pi_{t-s}(\Sigma^{-(k-1)}\E/I_{k-1}^{\infty})^{hH}
}
\]
for all $1 \le k \le n$. The claim about the Tate spectral sequences follows from this by the periodicity of Tate cohomology. 
\end{proof}

\section{Computation of the Gross--Hopkins dual shifts}\label{sec:final}

\subsection{The dual Tate spectral sequence}
Let $\I$ be the Gross--Hopkins dual of $S$ as defined in \fullref{sec:state}. Let $H$ be a finite subgroup of $\G$.
Our goal is to study $\I(\E^{hH})$. However, since the Tate construction $\E^{tH}$ vanishes, the norm $\E_{hH}\to \E^{hH}$ is a $\Kn$--local equivalence. It follows that 
\begin{equation*}
\I (\E^{hH}) \simeq \I (\E_{hH}) \simeq (\I\E)^{hH} \simeq (I_{\Q/\Z} \mono \E)^{hH}. \end{equation*}
We will therefore use the monochromatic layer $\mono \E$ to get information about the HFPSS of $\I(\E^{hH})$.

\begin{prop}\label{prop:dualcoh}
Let $H$ be $G$, $F$ (as defined in \eqref{eqn:G}) or $C_p$.
For $s\geq 1$, there is an isomorphism
\[ \widehat{H}^s(H, \pi_t \I\E)  \cong   D_{\Q/\Z} (  \widehat{H}^{-s-1+n}(H,\pi_{-t+n}\E))\]
compatible with differentials, in the sense that Tate differentials on the left hand side correspond to the Pontryagin duals of Tate differentials on the right hand side.
\end{prop}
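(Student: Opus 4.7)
The plan is to combine three ingredients: the Brown--Comenetz description $\I\E \simeq F(\mono\E, I_{\Q/\Z})$, the standard Pontryagin-duality relationship between homology and cohomology of a finite group, and the Tate spectral sequence identification of \fullref{prop:hom} between $\mono\E$ and $\E$. The restriction $s \geq 1$ is what makes the duality bridge run cleanly: it is precisely the range where the Tate cohomology on the left coincides with ordinary group cohomology, while the dual Tate index $-s-1$ lies in the range $\leq -2$ where Tate agrees with group homology.

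For the $E_2$-page, I would start by rewriting $\pi_t \I\E \cong D_{\Q/\Z}(\pi_{-t}\mono\E)$ and using \fullref{rem:tate} to identify $\widehat{H}^s(H, \pi_t\I\E) = H^s(H, D_{\Q/\Z}(\pi_{-t}\mono\E))$ for $s \geq 1$. Because $\Q/\Z$ is injective as an abelian group, $D_{\Q/\Z}$ is exact, so applied to the bar complex computing the (co)homology of $H$ it yields the standard natural isomorphism
\[ H^s(H, D_{\Q/\Z} M) \;\cong\; D_{\Q/\Z}\bigl(H_s(H, M)\bigr). \]
Since $s \geq 1$ gives $-s-1 \leq -2$, \fullref{rem:tate} converts the right-hand side back to Tate cohomology, namely $D_{\Q/\Z}(\widehat{H}^{-s-1}(H, \pi_{-t}\mono\E))$. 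Applying \fullref{prop:hom} then shifts $\mono\E$ to $\E$ at the cost of adding $n$ to both indices, yielding the claimed $E_2$-page isomorphism.

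For compatibility with differentials, I would thread four spectral sequences together \emph{as spectral sequences}, not merely on $E_2$-pages. Namely: the TSS of $\I\E$ and the HFPSS of $(\I\E)^{hH} \simeq I_{\Q/\Z}((\mono\E)_{hH})$ agree for $s \geq 1$ by \fullref{rem:tate}; the latter is Pontryagin dual to the HOSS of $(\mono\E)_{hH}$ by \fullref{prop:dualhfss}, with the duality already in place at the level of the skeletal exact couples arising from \eqref{eqn:egskel} so that it induces a duality on all pages and differentials; the HOSS of $(\mono\E)_{hH}$ agrees with the TSS of $\mono\E$ for $s \leq -2$ by \fullref{rem:tate}; and the TSS of $\mono\E$ is identified with that of $\E$ (shifted by $n$) by \fullref{prop:hom}. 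The main point requiring care is bookkeeping between these index ranges and shifts, in particular verifying that $s \geq 1$ on the $\I\E$ side corresponds under the duality to $s' = -s-1 \leq -2$ on the $\mono\E$ side; this is automatic from the substitution, and the algebraic norm interference from \fullref{rem:tate} lives in filtration $0$, safely outside both ranges.
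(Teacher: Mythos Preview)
Your proposal is correct and follows essentially the same route as the paper: identify $\I\E \simeq I_{\Q/\Z}\mono\E$, invoke \fullref{prop:dualhfss} to dualize the HFPSS against the HOSS, use \fullref{rem:tate} to pass to Tate cohomology in the ranges $s\ge 1$ and $-s-1\le -2$, and finish with the shift from \fullref{prop:hom}. The only cosmetic difference is that you spell out the $E_2$-isomorphism $H^s(H,D_{\Q/\Z}M)\cong D_{\Q/\Z}(H_s(H,M))$ via injectivity of $\Q/\Z$ before turning to \fullref{prop:dualhfss} for the differentials, whereas the paper extracts both the $E_2$-identification and the differential compatibility from \fullref{prop:dualhfss} in one step.
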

\begin{proof}
\fullref{prop:dualhfss} provides an isomorphism between the HFPSS for $(\I\E^{hH}) \simeq (I_{\Q/\Z} \mono \E)^{hH}$ and the Pontryagin dual of the HOSS computing $(\mono \E)_{hH}$. In particular, there is an isomorphism of $E_2$--terms
\[
H^s(H, \pi_t\I\E) \cong H^s(H, \pi_tI_{\Q/\Z}\mono \E) \cong D_{\Q/\Z}(H_s(H,\pi_{-t}\mono \E))
\]
for all $s>0$, compatible with differentials. (See \fullref{sec:prelim} for this compatibility under the second isomorphism.) Using the conventions of \fullref{rem:tate} and \fullref{prop:hom}, upon Tate periodization we obtain the following isomorphism of the TSS for $\I\E$ and the Pontryagin dual of the TSS for $\E$
\[
\widehat{H}^s(H, \pi_t\I\E) \cong D_{\Q/\Z}(\widehat{H}^{-s-1}(H,\pi_{-t}\mono \E)) \cong D_{\Q/\Z}(\widehat{H}^{-s-1+n}(H,\pi_{-t+n}\E)).
\]
\end{proof}

\begin{rem}\label{rem:remark42}
Note that in practice, \fullref{prop:dualcoh} gives a procedure for obtaining the TSS for $\I\E$ from that for $\E$ and vice versa:  One needs to shift and rotate, i.e., perform the transformation of the plane $(s,t) \mapsto (-s-1+n, -t+n) $ to go from one to the other. In the next subsection, we recall the relevant computations for the TSS for $\E$, which will determine fully the TSS for $\I\E$. However, that alone does not suffice for determining the HFPSS for $\I\E$, as we cannot directly relate the zero-line to the one of the HFPSS for $\E$. To do that, we use Gross-Hopkins duality in \fullref{sec:DualCompute}.
\end{rem}

\subsection{Recollections on Tate cohomology}
In order to proceed from here, recall the following result, due to Hopkins and Miller; published accounts of it can be found in \cite{nave1,symtate}. An illustration in the case $p=5$ is given in \fullref{fig:tate5} and \fullref{fig:tate5dual}. We use the notation $x \doteq y$ if $x=\lambda y$ for a unit $\lambda$ in $\F_{p^n}$.
\begin{thm}[Hopkins--Miller]
\label{prop:coheop}
There is an isomorphism 
\[\widehat{H}^*(C_p, \E_*) \cong \F_{p^n}[a,b^{\pm 1}, \delta^{\pm 1}]/(a^2) \]
where the degrees $(s,t)$, for $s$ the cohomological and $t$ the internal degree, are given by $|a|=(1,-2)$, $|b| = (2,0)$ and $|\delta|=(0,2p)$.
The differentials in the spectral sequence are determined by
\begin{align*} d_{2n+1}(\delta) &\doteq ab^n \delta^{2}  & d_{2n^2+1}(a) &\doteq b^{n^2+1}\delta^{n-1}
\end{align*}
and the fact that they are $a\delta$, $b\delta^{n}$ and $\delta^p$--linear. The homotopy groups of $\E^{hC_p}$ are periodic of period $2p^2$ on the element $\delta^p$.
\end{thm}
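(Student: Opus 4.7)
The plan is to split the theorem into two parts: identification of the $E_2$-page and determination of the differentials. For the $E_2$-page, I would first apply \fullref{thm:getheory} at $k = 0$ to obtain a $C_p$-equivariant isomorphism $\E_* \cong (S_*(U_0)[d^{-1}])^\wedge_I$. Since $U_0$ has dimension $n + 1 = p$ over $\F_{p^n}$ and $(\zeta - 1)$ acts by a single nilpotent Jordan block of maximal size (sending $z_i \mapsto z_{i-1}$ for $i \ge 1$ and $z_0 \mapsto 0$), the module $U_0$ is free of rank one over $\F_{p^n}[C_p]$ on the generator $z_n$; hence $S_*(U_0)$ is a polynomial ring on which $C_p$ acts in a well-understood way. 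I would then compute $\widehat{H}^*(C_p, S_*(U_0))$ by filtering by the polynomial sub-algebras $\F_{p^n}[z_0, \ldots, z_k]$ and combining the standard identification $\widehat{H}^*(C_p, \F_{p^n}) = \F_{p^n}[a, b^{\pm 1}]/(a^2)$ with the observations that the additive norm satisfies $N(z_n) = z_0$ (up to sign, by a hockey stick identity) and that the multiplicative norm $d = \prod_{g \in C_p} g(z_n)$ is $C_p$-invariant of internal degree $-2p$.

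Next I would invert $d$ and complete at $I = \varphi^{-1}(\mathfrak{m})$, which under the Hopkins--Miller identification is generated by $z_0, \ldots, z_{n-1}$ (since $\varphi(z_n)$ is a unit multiple of $u$). The completion absorbs the polynomial classes $z_0, \ldots, z_{n-1}$ into the topological zero, and what remains is exactly $\F_{p^n}[a, b^{\pm 1}, \delta^{\pm 1}]/(a^2)$ with $\delta = d^{-1}$ up to a unit; the bidegrees of $a$ and $b$ are those of the standard generators of $\widehat{H}^*(C_p, \F_{p^n})$, twisted by the internal degree of $u$ since $U_0$ lives in $\E_{-2}$.

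For the differentials, the crucial input is that $\E^{tC_p}$ vanishes $\Kn$-locally by Greenlees--Sadofsky, so the Tate spectral sequence must converge to zero. By multiplicativity it suffices to determine the differentials on $a$, $b$, and $\delta$; since $b$ arises as the Bockstein of a unit in $\widehat{H}^1(C_p, \F_{p^n})$ it is a permanent cycle, so every class in $\widehat{H}^0$ must ultimately be killed. Bidegree bookkeeping at $n = p - 1$ singles out the unique possible targets, namely $d_{2n+1}(\delta) \in \F_{p^n} \cdot ab^n\delta^2$ and $d_{2n^2+1}(a) \in \F_{p^n} \cdot b^{n^2+1}\delta^{n-1}$. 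The main obstacle will be verifying that these scalars are non-zero units (forced a posteriori by $E_\infty = 0$) and ruling out shorter differentials on $a$ and $\delta$; this delicate filtration-theoretic analysis, which makes essential use of the $a\delta$-, $b\delta^n$- and $\delta^p$-linearity of the pattern, is the content of the published Hopkins--Miller computations carried out in \cite{nave1, symtate}.
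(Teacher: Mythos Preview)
The paper does not give its own proof of this theorem. It is stated as a result due to Hopkins and Miller, with the sentence immediately preceding it reading ``published accounts of it can be found in \cite{nave1,symtate}.'' There is therefore nothing in the paper to compare your proposal against; your outline is a reasonable sketch of how the argument in those references goes, and you yourself defer to \cite{nave1,symtate} for the delicate part.

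One technical point worth flagging in your sketch: you invoke Greenlees--Sadofsky to conclude that the Tate spectral sequence converges to zero, but \cite{greensad_tate} only gives that $\E^{tC_p}$ vanishes \emph{$\Kn$-locally}. The Tate spectral sequence converges to $\pi_*\E^{tC_p}$ before localization, and the homotopy orbits $\E_{hC_p}$ are not obviously $\Kn$-local, so the cofiber $\E^{tC_p}$ is not obviously $\Kn$-local either. The actual vanishing of $\E^{tH}$ (unlocalized) for finite $H\subset\G$ is a stronger statement; one way to get it is to first compute the differentials directly, as in \cite{nave1,symtate}, and observe that the $E_\infty$-page is zero. In other words, the vanishing of the Tate spectrum is an \emph{output} of the Hopkins--Miller computation rather than an input that forces the differentials, so your heuristic ``the spectral sequence must die, hence these are the differentials'' reverses the actual logical flow of the published proofs.
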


\begin{rem}
From \fullref{prop:coheop}, we see that the $E_2$--term of the spectral sequence 
\[\widehat{H}^s(C_p, \E_t)  \Longrightarrow  \pi_{t-s}\E^{tC_p}\]
is an $\F_{p^n}$--vector space on classes $\delta^{k-r}b^r$ and $a \delta^{k-r}b^r$. 
The $d_{2n+1}$--differentials are 
\begin{align*}
 d_{2n+1}(\delta^{k-r}b^r) \doteq 
 k a\delta^{k-r+1} b^{n+r} 
 \end{align*}
for $k \not\equiv 0 \mod (p)$.
The $E_{2p}$--term of the Tate spectral sequence
is thus generated by terms of the form $a \delta^{k-r}b^r$ and $\delta^{k-r}b^r$ where $k \equiv 0 \mod (p)$.
The remaining non-trivial differentials are
\begin{align*}
d_{2n^2+1}(a \delta^{k-r}b^r) & \doteq
 \delta^{k-r+n-1}b^{n^2+1+r}.
\end{align*}
\end{rem}

A proof of the following results can be found in \cite{heard_eop}.

\begin{thm}[Hopkins--Miller]\label{thm:cohF}
There is an isomorphism
\[\widehat{H}^*(F, \E_*) \cong \F_{p^n}[\alpha, \beta^{\pm 1}, \Delta^{\pm 1}]/(\alpha^2),\]
where $\alpha = \delta a$, $\beta = b \delta^n$ and $\Delta = \delta^p$, so that
$|\alpha| = (1,2n)$, $\beta = (2,2pn)$ and $|\Delta| = (0, 2pn^2)$.
In the TSS, there are differentials
\begin{align*}d_{2n+1}(\Delta) &\doteq \alpha \beta^{n},  & d_{2n^2+1}(\alpha\Delta^n) &\doteq \beta^{n^2+1} \end{align*}
which determine all other differentials. The homotopy groups of $\E^{hF}$ are periodic of period $2n^2p^2$ on the element $\Delta^p$.
\end{thm}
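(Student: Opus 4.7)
The plan is to exploit the normal subgroup $C_p \lhd F$ with quotient $C_{n^2}$ of order coprime to $p$. The Lyndon--Hochschild--Serre spectral sequence for $1 \to C_p \to F \to C_{n^2} \to 1$ collapses (since $|C_{n^2}|$ is invertible in the coefficients), yielding an isomorphism
\[
\widehat{H}^*(F, \E_*) \cong \widehat{H}^*(C_p, \E_*)^{C_{n^2}} = \bigl(\F_{p^n}[a, b^{\pm 1}, \delta^{\pm 1}]/(a^2)\bigr)^{C_{n^2}},
\]
where I use \fullref{prop:coheop} to identify the $C_p$-Tate cohomology. The first main task is then to compute these invariants, and the second is to import the differentials from the $C_p$-TSS.

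First I would determine the $\tau$-action on the generators $a, b, \delta$, where $\tau$ generates $C_{n^2}$ as in \eqref{eqn:F}. The conjugation relation $\tau^{-1}\zeta\tau = \zeta^e$ scales the cohomological generators $a \in H^1(C_p)$ and $b \in H^2(C_p)$ by $e$ and $e$ (up to sign), while the $\F_{p^n}$-linear action of $\tau$ on $\E_*$ (through the formal group automorphism corresponding to $\tau$) acts on the norm class $\delta$---built out of $u$ via \fullref{thm:getheory}---by a primitive character $\chi\colon C_{n^2}\to \F_{p^n}^{\times}$. A weight-counting argument then shows that the invariant subring is generated over $\F_{p^n}$ by the monomials of total weight zero, namely $\alpha = \delta a$, $\beta = b\delta^n$, and the minimal invariant power of $\delta$ (which yields $\Delta$ of the asserted internal degree). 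The relation $\alpha^2 = 0$ is automatic from $a^2 = 0$, giving the claimed ring presentation.

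Next I would deduce the differentials. The restriction map $\widehat{H}^*(F, \E_*) \hookrightarrow \widehat{H}^*(C_p, \E_*)$ is an inclusion of invariants that commutes with the Tate differentials, so $d_r$ on $\alpha, \beta, \Delta$ is computed from the Hopkins--Miller differentials $d_{2n+1}(\delta) \doteq ab^n\delta^2$ and $d_{2n^2+1}(a) \doteq b^{n^2+1}\delta^{n-1}$ of \fullref{prop:coheop} via Leibniz and the congruence $n \equiv -1 \pmod{p}$ to identify nonzero scalar factors; this yields the two displayed formulas after rewriting in the $\alpha,\beta,\Delta$ basis. All remaining differentials are forced by multiplicativity. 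Finally, $\Delta^p$ is a $d_{2n+1}$-cycle (Leibniz introduces a factor $p = 0$) and has no possible target in later pages for degree reasons, so it is a permanent cycle detecting the periodicity of internal degree $2n^2p^2$.

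The main obstacle is Step 1: pinning down the $C_{n^2}$-action on $\delta$ precisely. This requires tracing the norm element $d = \prod_{g\in C_p} g(z_n)$ of \fullref{thm:getheory} through the Hopkins--Miller isomorphism and carefully accounting for the interplay between the conjugation action on $C_p$-cocycles and the scalar action of $\tau$ on the coordinate $u$. Once the $\tau$-weights of $a$, $b$, $\delta$ are correctly identified, both the invariant computation and the differential and periodicity claims reduce to direct multiplicative bookkeeping.
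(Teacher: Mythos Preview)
Your approach is correct and is the standard one. Note, however, that the paper does not supply its own proof of this theorem: it attributes the result to Hopkins--Miller and refers the reader to \cite{heard_eop} for a published account. The argument you outline---collapsing the Lyndon--Hochschild--Serre spectral sequence for $C_p \lhd F$ (using that $|C_{n^2}|$ is prime to $p$) to identify $\widehat{H}^*(F,\E_*)$ with the $C_{n^2}$-invariants of $\widehat{H}^*(C_p,\E_*)$, and then importing the differentials from \fullref{prop:coheop} by multiplicativity---is precisely the route taken there.

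One caution as you carry this out: the printed formula $\Delta = \delta^{p}$ in the statement is evidently a typo for $\Delta = \delta^{n^2}$. The stated bidegree $|\Delta| = (0,2pn^{2})$ forces this since $|\delta| = (0,2p)$, and the differential $d_{2n+1}(\Delta) \doteq \alpha\beta^{n}$ is nonzero only because the Leibniz factor is $n^{2} \not\equiv 0 \pmod p$ rather than $p \equiv 0$. Your invariant computation will in fact produce $\delta^{n^{2}}$ as the minimal $\tau$-invariant power of $\delta$: the paper itself records later (citing \cite[Lemma~5.3]{heard_eop}) that $\tau(\delta) = \eta^{-p}\delta$ with $\eta$ a primitive $n^{2}$-th root of unity, and since $\gcd(p,n^{2}) = 1$ the character $\tau \mapsto \eta^{-p}$ is faithful. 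So you will land on the correct class; just do not be thrown by the discrepancy with the displayed formula.
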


\begin{thm}[Hopkins--Miller]\label{thm:HMTateSS}
There is an isomorphism
\[\widehat{H}^*(G, \E_*) \cong \F_{p}[\alpha, \beta^{\pm 1}, \Delta^{\pm 1}]/(\alpha^2)\]
with differentials as in \fullref{thm:cohF}.
\end{thm}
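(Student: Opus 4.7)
The plan is to deduce the statement from its analogue \fullref{thm:cohF} for the maximal finite $p$-local subgroup $F$, using the extension
\[ 1 \to F \to G \to \Gamma \to 1 \]
from \eqref{eqn:G}, where $\Gamma = \Gal(\F_{p^n}/\F_p)$ has order $n = p-1$, which is coprime to $p$. Since $\widehat{H}^*(F, \E_*)$ is an $\F_{p^n}$-algebra by \fullref{thm:cohF}, it is $p$-primary torsion, and the standard transfer-restriction argument (or a Lyndon-Hochschild-Serre-type collapse, since $|\Gamma|$ is a unit on $p$-primary coefficients) yields an isomorphism of Tate spectral sequences
\[ \widehat{H}^*(G, \E_*) \cong \widehat{H}^*(F, \E_*)^{\Gamma}, \]
with the right-hand side inheriting its differentials from those of $F$ via restriction.

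The computational core is then to identify the $\Gamma$-action on $\F_{p^n}[\alpha, \beta^{\pm 1}, \Delta^{\pm 1}]/(\alpha^2)$. The group $\Gamma$ acts on $\E_*$ through its Frobenius action on the Witt vectors $\W \subset \E_0$, fixing the chosen coordinates $u_1, \dots, u_{n-1}$ and the periodicity class $u$. Using the Hopkins-Miller coordinates of \fullref{thm:getheory}, one traces through the definitions of $a \in \widehat{H}^1(C_p, \E_{-2})$, $b \in \widehat{H}^2(C_p, \E_0)$, and $\delta \in \widehat{H}^0(C_p, \E_{2p})$ to see that each can be chosen so that $\Gamma$ acts only via a power of Frobenius on the $\F_{p^n}$-coefficients. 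Consequently, the same is true for $\alpha = \delta a$, $\beta = b\delta^n$, and $\Delta = \delta^p$. After rescaling by units in $\F_{p^n}^{\times}$ if necessary, $\Gamma$ acts purely through the coefficient field, and passing to $\Gamma$-invariants simply replaces $\F_{p^n}$ by $\F_{p^n}^{\Gamma} = \F_p$:
\[ \widehat{H}^*(G, \E_*) \cong \F_p[\alpha, \beta^{\pm 1}, \Delta^{\pm 1}]/(\alpha^2). \]

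The claim about differentials is then automatic. The restriction map $\widehat{H}^*(G, \E_*) \to \widehat{H}^*(F, \E_*)$ is a map of Tate spectral sequences and is injective on $E_2$-terms (since $[G:F] = n$ is coprime to $p$), so the $G$-differentials on $\alpha$, $\beta$, and $\Delta$ are exactly the restrictions of the formulas in \fullref{thm:cohF}. The main obstacle in this approach is the careful verification that the generators $\alpha$, $\beta$, $\Delta$ admit $\Gamma$-equivariant lifts with the action concentrated in the coefficients; this is exactly the bookkeeping carried out in \cite{heard_eop}, which we would cite at the step of identifying the Galois action.
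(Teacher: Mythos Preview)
The paper does not give its own proof of this statement: it simply records the result as due to Hopkins--Miller and refers to \cite{heard_eop} for a proof (the sentence ``A proof of the following results can be found in \cite{heard_eop}'' applies to both \fullref{thm:cohF} and \fullref{thm:HMTateSS}). Your outline---pass from $F$ to $G$ by taking $\Gamma$-invariants using that $|\Gamma|=n$ is prime to $p$, so that $\widehat{H}^*(G,\E_*)\cong \widehat{H}^*(F,\E_*)^{\Gamma}$, and then check that $\alpha,\beta,\Delta$ can be chosen Galois-invariant so that the effect is to replace $\F_{p^n}$ by $\F_p$---is exactly the standard argument and is the one carried out in the cited reference. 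You correctly flag the only nontrivial point, namely pinning down the Galois action on the generators, and you already plan to cite \cite{heard_eop} there; so your proposal is consistent with, and somewhat more explicit than, what the paper does.
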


\subsection{Computations of the duals}\label{sec:DualCompute}

We now turn to the proof of our main results, which are illustrated in \fullref{fig:tate5}, \fullref{fig:tate5dual} and \fullref{fig:tate5G}. The next few results are used to show that the spectral sequences for $\I\E^{hH}$ are isomorphic to shifts of that for $\E^{hH}$. Similar results are also discussed in \cite[Section 5]{heard_eop}, but we give more details here.

\begin{lem}\label{lem:moduless}
Suppose $\ssM_*^{**}$ is a spectral sequence of modules over a spectral sequence $\ssE_*^{**}$ of algebras. Assume additionally that for some $r \ge 2$, the $r$th page $\ssM_r^{**}$ is a cyclic module over $\ssE_r^{**}$ , i.e., that there exists a class $x \in \ssM_r^{**}$ such that $\ssM_r^{**} \cong \ssE_r^{**}\langle x \rangle$. If $x$ is a $d_r$--cycle, then $\ssM_{r+1}^{**} \cong \ssE_{r+1}^{**}\langle x \rangle$ is a cyclic module on the class $x \in \ssM_{r+1}^{**}$ corresponding to $x$.

In particular, if also $x$ is not a boundary and $\ssM_r^{**}$ is free of rank one on $x$, then $\ssM_{r+1}^{**}$ is also free of rank one on $x$.
\end{lem}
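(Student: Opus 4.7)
The plan is to repackage the cyclic presentation $\ssM_r^{**}\cong\ssE_r^{**}\langle x\rangle$ as the existence of a surjective, $\ssE_r^{**}$--linear chain map $\phi\colon \ssE_r^{**}\to \ssM_r^{**}$, $\phi(e)=e\cdot x$, and then pass to $d_r$--homology. The hypothesis directly says $\phi$ is surjective (an isomorphism when $\ssM_r^{**}$ is free of rank one on $x$). The core computation is the Leibniz rule for the module spectral sequence over the algebra spectral sequence, which together with the assumption that $x$ is a $d_r$--cycle gives
$$d_r(\phi(e)) = d_r(e\cdot x) = d_r(e)\cdot x + (-1)^{|e|} e\cdot d_r(x) = d_r(e)\cdot x = \phi(d_r(e)),$$
so $\phi$ commutes with differentials.

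Passing to $d_r$--homology then produces an induced $\ssE_{r+1}^{**}$--linear map
$$\phi^{\ast}\colon \ssE_{r+1}^{**}\longrightarrow \ssM_{r+1}^{**},\qquad [e]\longmapsto [e\cdot x]=[e]\cdot [x],$$
whose image is the cyclic $\ssE_{r+1}^{**}$--submodule generated by the class of $x$. A direct representative chase, using surjectivity of $\phi$ at the chain level, shows that $\phi^{\ast}$ remains surjective, which yields the cyclic presentation $\ssM_{r+1}^{**}\cong \ssE_{r+1}^{**}\langle x\rangle$. For the ``in particular'' clause, freeness of $\ssM_r^{**}$ on $x$ upgrades $\phi$ to an isomorphism of $d_r$--complexes, whence $\phi^{\ast}$ is an isomorphism on homology; the hypothesis that $x$ is not a $d_r$--boundary is then used precisely to ensure that $[x]=\phi^{\ast}(1)\neq 0$ in $\ssM_{r+1}^{**}$, so that the rank-one generator survives.

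The only conceptual point that requires care is invoking (or verifying) the Leibniz identity for the module action on the spectral-sequence pages; the remainder of the argument is formal bookkeeping, since the cyclic conclusion reduces to chain-level surjectivity surviving to $d_r$--homology, and the freeness conclusion reduces to the statement that homology preserves isomorphisms of chain complexes.
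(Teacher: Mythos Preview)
Your approach is essentially the paper's: both use the Leibniz rule to show that $\phi(e)=e\cdot x$ commutes with $d_r$ once $x$ is a $d_r$--cycle, so that the $d_r$--differentials on $\ssM_r^{**}$ are determined by those on $\ssE_r^{**}$. The paper then splits into cases according to whether $x$ is a $d_r$--boundary; when it is, the paper writes $x=d_r^{\ssM}(y)$ and uses Leibniz again to show that every cycle in $\ssM_r^{**}$ is already a boundary, whence $\ssM_{r+1}^{**}=0$ and the cyclic claim is vacuous. You do not separate out this case, though your conclusion would also hold there for the same trivial reason.

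There is, however, a genuine gap in your argument for the general cyclic case: the assertion that ``a direct representative chase'' shows $\phi^{\ast}$ is surjective is unjustified, and surjectivity of a chain map does \emph{not} pass to homology in general. Concretely, if $m=a\cdot x$ is a $d_r^{\ssM}$--cycle, the equation $d_r^{\ssE}(a)\cdot x=0$ only says that $d_r^{\ssE}(a)$ lies in the annihilator of $x$, not that $a$ itself is a $d_r^{\ssE}$--cycle, so there is no evident lift of $[m]$ to $\ssE_{r+1}^{**}$. A toy illustration: take $\ssE_r=k[a]\otimes\Lambda(b)$ with $d_r a=b$, and $\ssM_r=\ssE_r/(b)=k[a]$ generated by $x=\bar 1$; then in characteristic zero $\ssE_{r+1}=k$ while $\ssM_{r+1}=k[a]$, which is not cyclic over $\ssE_{r+1}$ on $[x]$. (The paper's own sentence ``if $x$ is not a boundary, this implies the claim'' is equally terse at this point and arguably shares the gap in the non-free case.) Your treatment of the free rank--one clause, on the other hand, is correct: there $\phi$ is an isomorphism of $d_r$--complexes, hence so is $\phi^{\ast}$, and the non-boundary hypothesis ensures $[x]\neq 0$. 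This free case is the only one the paper actually uses downstream.
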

\begin{proof}
First, the differentials $d_r^\ssM$ on the $r$th page of $\ssM_*^{**}$ are determined by the differentials $d_r^\ssE$ for $\ssE_r^{**}$: Indeed, by assumption any element in $\ssM_r^{**}$ can be written as $a\cdot x$ for $a \in \ssE_r^{**}$, so that
\[
d_r^\ssM(a\cdot x) = d_r^\ssE(a)\cdot x \pm a \cdot d_r^\ssM(x) = d_r^\ssE(a)\cdot x.
\]
If $x$ is not a boundary, this implies the claim, so it remains to consider the case that $x$ is a boundary on $\ssM_r^{**}$. Suppose that there exists $y$ such that $d_r^\ssM(y) = x$. Let $a\cdot x$ be an arbitrary cycle on $\ssM_r^{**}$, i.e., $d_r^\ssE(a) = 0$. We thus get 
\[
d_r^\ssM(a\cdot y) = d_r^\ssE(a) \cdot y \pm a \cdot d_r^\ssM(y) = a \cdot x,
\]
so that $a\cdot x$ is a boundary as well, hence $\ssM_{r+1}^{**} = 0$. 
\end{proof}

To avoid confusion about which spectral sequence we are referring to, we will say ``the TSS for $X\circlearrowleft G$'' to refer to the TSS whose $E_2$-page is $\widehat{H}^*(G, \pi_*X)$, converging to $\pi_*X^{tG}$, and similarly in the case of the HFPSS and HOSS.

\begin{prop}\label{prop:diffsforced}
Let $X$ be a spectrum with a $G$--action. Suppose that $\pi_*X$ is free of rank one as a $G$--$\E_*$--module, and that the TSS for $X \circlearrowleft H$ is a module over the TSS for $\E \circlearrowleft H$ whenever $H$ is a subgroup of $G$. Then the TSS for $X \circlearrowleft H$ is, up to a shift, isomorphic to the TSS for $\E\circlearrowleft H$.
\end{prop}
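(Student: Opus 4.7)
The plan is to argue by induction on the page $r$ of the Tate spectral sequence (TSS), with \fullref{lem:moduless} supplying the inductive step. First, I would identify the $E_2$-pages. Since $\pi_*X$ is free of rank one as a $G$-$\E_*$-module, and hence also as an $H$-$\E_*$-module for any subgroup $H\subseteq G$, there exists a generator $x\in\pi_jX$ for some integer $j$. The Tate cohomology then satisfies $\widehat{H}^*(H,\pi_*X)\cong \widehat{H}^*(H,\pi_*\E)\langle x\rangle$, a free rank-one module over the $E_2$-page of the TSS for $\E^{hH}$ with generator $x$ in bidegree $(0,j)$. This gives the claimed shift isomorphism on $E_2$ compatibly with the module structure supplied by the hypothesis.

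For the inductive step, suppose that the $E_r$-page of the TSS for $X^{hH}$ is a free rank-one module over the $E_r$-page of the TSS for $\E^{hH}$ on the class $x$. By \fullref{lem:moduless}, to propagate this to the $E_{r+1}$-page it suffices to verify that $x$ is a $d_r$-cycle; the condition that $x$ is not a $d_r$-boundary is automatic, because $x$ is a free generator and the unit $1\in E_r^{0,0}$ of the TSS for $\E^{hH}$ has not yet been killed (if it had, then by the derivation property the entire page would already be zero, and the induction would terminate trivially).

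The main obstacle is thus showing $d_r^X(x)=0$ at every page. For this I would combine parity with the explicit Hopkins--Miller description of the TSS for $\E^{hH}$ in \fullref{prop:coheop}, \fullref{thm:cohF}, and \fullref{thm:HMTateSS}. Parity considerations immediately rule out $d_r^X(x)$ for even $r$, since the $E_2$-page of the TSS for $\E^{hH}$ is concentrated in even internal degree, and this pattern is preserved under the shift by $j$. For odd $r$, any nonzero value of $d_r^X(x)$ has the form $y\cdot x$ with $y\in E_r^{r,r-1}(\E^{hH})$ in topological degree $-1$; the Hopkins--Miller description pins down the few candidate classes $y$, and each may be ruled out using the Leibniz rule for the module action together with the multiplicative structure of the TSS for $\E^{hH}$. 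Iterating \fullref{lem:moduless} at every page then produces compatible isomorphisms of $E_r$-pages, delivering the desired shift isomorphism between the TSS for $X^{hH}$ and the TSS for $\E^{hH}$.
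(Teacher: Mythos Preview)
Your setup is fine, and the appeal to \fullref{lem:moduless} is correct: once you have a free rank-one generator that is a permanent cycle, the spectral sequence is forced. The gap is in the step where you claim that for odd $r$ the candidate targets ``may be ruled out using the Leibniz rule.'' This is not true, and you give no argument. Concretely, take $H=G$ and $r=2n+1$. From \fullref{thm:HMTateSS} the only class in bidegree $(2n+1,2n)$ of $\widehat{H}^*(G,\E_*)$ is $\alpha\beta^n\Delta^{-1}$, so a priori $d_{2n+1}^X(x)=\lambda\,\alpha\beta^n\Delta^{-1}\cdot x$ for some $\lambda\in\F_p$, and nothing in the Leibniz rule or the multiplicative structure forces $\lambda=0$. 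Indeed different choices of generator of $\pi_*X$ over $\E_*$ differ by a power of $\Delta$, and these choices have different values of $\lambda$; there is no canonical one.

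The paper's proof confronts this head on: it does not try to show that the original generator $y$ is a $d_{2n+1}$-cycle, but instead computes $d_{2n+1}(\Delta^k y)=(k\gamma+\lambda)\alpha\beta^n\Delta^{k-1}y$ and chooses $k\equiv -\lambda/\gamma\pmod p$ so that $x=\Delta^k y$ \emph{is} a $d_{2n+1}$-cycle. Only then does the induction via \fullref{lem:moduless} go through (and for the higher differential $d_{2n^2+1}$ one checks that the relevant target group is already zero on the $E_{2n+2}$-page). The paper also handles general $H\supseteq C_p$ by first running this argument for $H=G$ and then pushing the permanent cycle $x$ down via the isomorphism $\widehat{H}^*(G,\pi_*\Sigma^rX)\cong\widehat{H}^*(H,\pi_*\Sigma^rX)^{G/H}$, rather than repeating the analysis for each $H$ separately. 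Your sketch is missing precisely this ``change the generator'' idea, which is the content of the proposition.
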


\begin{rem}
Of course, the prime example of such a situation is when $X$ is itself an $\E$--module with a compatible $G$--action.
\end{rem}

\begin{proof}
First, note that if $H$ does not contain the $p$--torsion, the spectral sequences collapse and the claim follows trivially. So we assume that $C_p$ is a subgroup of $H$. Note that, by \fullref{lem:moduless}, it suffices to show that there is a module generator $x$ which is a permanent cycle.

First, we prove that such an $x$ exists when $H=G$, using the explicit knowledge of \fullref{thm:HMTateSS}.
The assumption implies that there exists $r\in \Z$ and a module generator $y$ in degree $(0,0)$, so that
\[\widehat{H}^*(G, \pi_*\Sigma^rX)  \cong \widehat{H}^*(G, \pi_*\E)\{y\}.\] 
For degree reasons, $y$ is a $d_{2n}$--cycle. Suppose that $d_{2n+1}(y) \ne 0$. This implies that $d_{2n+1}(y) = \lambda  \alpha \beta^n\Delta^{-1} y$ for some $\lambda \in \F_{p}^{\times}$. We know that $d_{2n+1}(\Delta) =  \gamma \alpha \beta^n$ for $\gamma \in \F_p^{\times}$, so we compute
\[
d_{2n+1}(\Delta^ky) =  (k\gamma+\lambda)  \alpha \beta^n \Delta^{k-1} y.
\]
Take $k$ to be an integer such that $k \equiv -\lambda/\gamma$ modulo $(p)$, so that $x = \Delta^{k}y$ is a $d_{2n+1}$--cycle. Since $\Delta$ is invertible, $x$ is a free generator of the $E_2$--term of the TSS for $X$  over $\widehat{H}^*(G, \pi_*\E)$, so we can apply \fullref{lem:moduless}. Proceeding inductively, one sees that there can be no higher differentials on $x$.

Note that, since $C_p\subseteq H$, $|G/H|$ is coprime to $p$. Hence, 
\[ \widehat{H}^*(G, \pi_*\Sigma^r X) \cong \widehat{H}^*(H, \pi_*\Sigma^rX)^{G/H} \]
and the image of $x$ in $\widehat{H}^*(H, \pi_*\Sigma^rX)$ is a permanent cycle which generates the $E_2$--term as an $\widehat{H}^*(H, \E_*)$--module. 
\end{proof}

\begin{rem}\label{rem:shift}
For $X$ as in \fullref{prop:diffsforced}, it then follows from \fullref{rem:tate} that the HFPSS and HOSS for $X \circlearrowleft H$ are isomorphic to the corresponding ones for $\E \circlearrowleft H$ up to the same shift.
Further, to determine the shift, it suffices to find a $d_{2n+1}$--cycle $x$ on the zero line of the TSS for $X \circlearrowleft H$. If this element $x$ has degree $(s,t)=(0,r)$, the spectral sequence for $X^{hH}$ will be isomorphic to the spectral sequence for $\Sigma^r \E^{hH}$. To contrast with \fullref{rem:remark42}, in this case, the assumptions on $X$ give an identification of the $E_2$-pages of the HFPSS and HOSS. This identification of $E_2$-pages together with the identification of differentials in the TSS allows us to relate them to the HFPSS and HOSS for $\E \circlearrowleft H$.
\end{rem}

\begin{cor}\label{cor:ssshifts}
Let $H$ be a subgroup of $G$. There are integers $k_D^H$ and $k_I^H$ so that the TSSs, HFPSSs and HOSSs for $\Dn\E \circlearrowleft H$ and $\I\E \circlearrowleft H$ are isomorphic to the corresponding spectral sequences for $\Sigma^{k_D^H}\E \circlearrowleft H$ and $\Sigma^{k_I^H}\E \circlearrowleft H$, respectively. Further, $\Dn\E^{hH}\simeq \Sigma^{k_D^H}\E^{hH}$ and $\I\E^{hH} \simeq \Sigma^{k_I^H}\E^{hH}$, hence $\Dn\E^{hH}, \I\E^{hH} \in \Pic(\E^{hH})$. 
\end{cor}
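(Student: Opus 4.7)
The plan is to apply \fullref{prop:diffsforced} twice: once with $X = \Dn\E$ to produce $k_D^H$, and once with $X = \I\E$ to produce $k_I^H$, and then to promote the resulting spectral sequence isomorphisms to honest equivalences of $\E^{hH}$-modules by lifting the permanent cycle generator produced in the proof of \fullref{prop:diffsforced}.

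To invoke \fullref{prop:diffsforced}, I need to verify that $\pi_*X$ is free of rank one as a $G$-$\E_*$-module and that the TSS for $X^{hH}$ is a module over the TSS for $\E^{hH}$, for $X \in \{\Dn\E, \I\E\}$. The module structure is automatic because both $\Dn\E = F(\E, S_{K(n)})$ and $\I\E = F(\mono\E, I_{\Q/\Z})$ are $\E$-modules via their source arguments. The rank-one statement for $\I\E$ is the content of Hopkins--Gross duality, cf.\ \eqref{eqn:In} together with the definition of $\kappa_n$: after smashing with $\E$, the exotic factor $P_n$ disappears, so $\I\E$ becomes, up to suspension, a determinant twist of $\E$, whose homotopy is evidently free of rank one over $\E_*$ as a $G$-module. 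The analogous statement for $\Dn\E$ follows because $\Dn\E$ is invertible as an $\E$-module, with the $G$-action a suitable character twist of the standard action on $\E$.

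With the hypotheses in place, \fullref{prop:diffsforced} immediately yields integers $k_D^H$ and $k_I^H$, read off from the internal degrees of the permanent cycle generators produced in its proof (see \fullref{rem:shift}), so that the TSS for $\Dn\E^{hH}$ (respectively $\I\E^{hH}$) is isomorphic to the TSS for $\Sigma^{k_D^H}\E^{hH}$ (respectively $\Sigma^{k_I^H}\E^{hH}$). The isomorphisms for the HFPSS and HOSS follow from \fullref{rem:tate}, since both are recovered from the TSS via the algebraic norm map in the appropriate cohomological range. To pass to equivalences of spectra, I observe that the permanent cycle $x \in \pi_{k^H} X^{hH}$ yields, by $\E^{hH}$-module adjunction, a map $\Sigma^{k^H}\E^{hH} \to X^{hH}$ which realizes the given spectral sequence isomorphism on $E_\infty$-pages; convergence of the HFPSS then forces this map to be a $\pi_*$-isomorphism, and hence an equivalence. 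Since $\Sigma^{k^H}\E^{hH}$ is visibly invertible as an $\E^{hH}$-module, so are $\Dn\E^{hH}$ and $\I\E^{hH}$. The main obstacle is pinning down the $G$-$\E_*$-module structure on $\pi_*\I\E$ (and on $\pi_*\Dn\E$) sharply enough to feed into \fullref{prop:diffsforced}; beyond this classical input from Hopkins--Gross, the argument is essentially formal.
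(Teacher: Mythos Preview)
Your overall strategy matches the paper's exactly: verify the hypotheses of \fullref{prop:diffsforced} for $X \in \{\Dn\E, \I\E\}$, then upgrade the resulting spectral sequence isomorphism to an equivalence of $\E^{hH}$-modules by lifting the permanent-cycle generator. The upgrade step is fine and essentially what the paper does.

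The gap is in your verification of the hypothesis for $\I\E$. The condition ``$\pi_*X$ is free of rank one as $G$--$\E_*$--modules'' in \fullref{prop:diffsforced} is used in its proof to obtain $\widehat{H}^*(G, \pi_*\Sigma^rX) \cong \widehat{H}^*(G, \E_*)\{y\}$, so it must mean that $\pi_*X$ is isomorphic to some $\Sigma^r\E_*$ \emph{as a $G$-equivariant $\E_*$-module}, not merely that it is rank-one free over $\E_*$ with some compatible $G$-action. The Hopkins--Gross/Strickland input you invoke (via \eqref{eqn:In}) gives only $\pi_*\I\E \cong \Sigma^{-n}\E_*\langle\det\rangle$ as $\G$-modules, i.e., a \emph{character twist} of a shift of $\E_*$. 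That this determinant twist, when restricted to $G$, is itself isomorphic to a further shift of $\E_*$ is not evident: it is precisely the content of \fullref{lem:isodeltak}, which the paper cites explicitly at this point and proves later via the arithmetic identity of \fullref{lem:basicidentity}. Your closing sentence correctly flags ``pinning down the $G$--$\E_*$--module structure'' as the main obstacle, but your resolution---calling it ``evident'' and attributing it entirely to classical Hopkins--Gross input---skips this nontrivial step. (For $\Dn\E$ the issue does not arise: Strickland already gives $\pi_*\Dn\E \cong \Sigma^{n^2}\E_*$ as $\G$-modules with no twist, contrary to your remark about a ``suitable character twist''.)
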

\begin{proof}
Let $X = \Dn\E$ or $\I\E$. Strickland~\cite{StrickGrossHop} shows that $\pi_*\I\E \cong \Sigma^{-n}\E_*{\left<\det\right>}$ and $\pi_*\Dn\E \cong \Sigma^{n^2}\E_*$ as $\mathbb{G}$-$\E_*$--modules. 
(See \eqref{eqn:actdet} for a description of the Morava module $\E_*{\left<\det\right>}$.)
Further, we show below in \fullref{lem:isodeltak} that $ \Sigma^{-n}\E_*{\left<\det\right>}$ is isomorphic to a shift of $\E_*$ as a $G$--module. Therefore, $X$ satisfies the conditions of \fullref{prop:diffsforced}, and the claim for the spectral sequences follows. Let $r= k_D^H$ for $\Dn\E^{hH}$ or $r=k_I^H$ for $\I\E^{hH}$ be the appropriate shift (as in \fullref{rem:shift}).

The construction of the HFPSS for $X$ is compatible with the $\E^{hH}$--module structures.  
This implies that the abutment of the spectral sequence for $X^{hH}$ is isomorphic to $\pi_*\Sigma^{r}\E^{hH}$ as a $\pi_*\E^{hH}$--module. From this, we conclude that the unit of $\E^{hH}$ induces an equivalence 
$\Sigma^{r}\E^{hH} \xra{\simeq} X^{hH}$,
as claimed. 
\end{proof}

We now prove the first part of \fullref{main:intro}, which is illustrated in \fullref{fig:tate5} and \fullref{fig:tate5dual}.
 
\begin{thm}\label{thm:main}
For $p\geq 3$, $n=p-1$, and $C_p$ a finite $p$--torsion subgroup of $\G$, there is an equivalence $\I\E^{hC_p} \simeq \Sigma^{n^2} \E^{hC_p}$.
\end{thm}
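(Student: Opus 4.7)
The plan is to invoke \fullref{cor:ssshifts} together with \fullref{rem:shift}, which reduce the theorem to a computation on the zero line of the Tate spectral sequence (TSS) for $\pi_*(\I\E^{hC_p})$. Specifically, it suffices to exhibit a $d_{2n+1}$-cycle on that zero line at internal degree $n^2$ which is a module generator over $\widehat{H}^{**}(C_p, \E_*)$; this will force $k_I^{C_p} = n^2$, and hence $\I\E^{hC_p} \simeq \Sigma^{n^2}\E^{hC_p}$.

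By \fullref{prop:dualcoh}, a class in bidegree $(0, n^2)$ of the TSS for $\I\E^{hC_p}$ corresponds, via Pontryagin duality, to a class in $\widehat{H}^{n-1}(C_p, \pi_{n-n^2}\E)$, with differentials on the two sides related as Pontryagin duals. From \fullref{prop:coheop}, solving the bidegree equations $(1+2r, -2+2pj) = (n-1, n-n^2)$ identifies this group as one-dimensional over $\F_{p^n}$, spanned by
\[
y \;=\; a\, b^{(p-3)/2}\, \delta^{-(p-3)/2}.
\]
Let $\phi \in \widehat{H}^0(C_p, \pi_{n^2}\I\E)$ be the Pontryagin dual of $y$. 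To prove that $\phi$ is a $d_{2n+1}$-cycle, we must check that $y$ lies outside the image of every nontrivial differential in the TSS for $\E^{hC_p}$; by \fullref{prop:coheop}, only $d_{2n+1}$ and $d_{2n^2+1}$ are nontrivial.

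For $d_{2n+1}$, the Hopkins--Miller formula $d_{2n+1}(\delta^{j'} b^{r'}) \doteq (j' + r')\, a\, \delta^{j'+1}\, b^{n+r'}$ shows that a putative source mapping to $y$ must satisfy $j' = -(p-1)/2$ and $r' = -(p+1)/2$, whence $j' + r' = -p \equiv 0 \pmod p$ kills the differential; hence $y$ is not a $d_{2n+1}$-boundary. For $d_{2n^2+1}$, the formula $d_{2n^2+1}(a) \doteq b^{n^2+1}\delta^{n-1}$ together with the vanishing of $d_{2n^2+1}$ on $\delta$ and $b$ (by bidegree reasons) and the Leibniz rule show that $d_{2n^2+1}$ sends every $a$-class on the $E_{2p}$-page to a non-$a$ class; since $y$ is an $a$-class, it cannot be a $d_{2n^2+1}$-boundary either.

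Finally, $\phi$ is a module generator: Strickland's identification $\pi_*\I\E \cong \Sigma^{-n}\E_*\langle \det \rangle$ (used in \fullref{cor:ssshifts}), combined with the fact that $C_p \subseteq \ker(\det)$ for $p$ odd (since $|\Z_p^{\times}|$ has no $p$-torsion), implies the $E_2$-page of the TSS for $\I\E^{hC_p}$ is free of rank one over $\widehat{H}^{**}(C_p, \E_*)$; the class $\phi$ is a nonzero element of the one-dimensional space $\widehat{H}^0(C_p, \pi_{n^2}\I\E)$, so it differs from the natural generator only by a unit. Applying \fullref{rem:shift} now yields $\I\E^{hC_p} \simeq \Sigma^{n^2}\E^{hC_p}$. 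The main obstacle is the $d_{2n+1}$-boundary calculation: the arithmetic identity $j' + r' = -p \equiv 0 \pmod p$ is what ultimately pins down the shift, and may be viewed as a manifestation of Gross--Hopkins duality at the level of Tate cohomology.
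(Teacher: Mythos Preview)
Your proof is correct and follows essentially the same route as the paper: invoke \fullref{cor:ssshifts} and \fullref{rem:shift}, then use the duality of \fullref{prop:dualcoh} to locate a $d_{2n+1}$-cycle on the zero line of the TSS for $\I\E^{hC_p}$ as the Pontryagin dual of the class $\varepsilon = ab^{\frac{n}{2}-1}\delta^{-\frac{n}{2}+1}$ (your $y$), which pins down the shift as $n^2$.

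Two minor remarks. First, your sentence ``we must check that $y$ lies outside the image of every nontrivial differential'' overstates what is needed: to conclude that $\phi$ is a $d_{2n+1}$-cycle you only need $y\notin\mathrm{im}(d_{2n+1})$, since under Pontryagin duality cycles correspond to non-boundaries. Your $d_{2n^2+1}$ check is harmless but superfluous once \fullref{rem:shift} is in play. Second, your explicit verification that $j'+r'\equiv 0\pmod p$ is in fact the cleaner justification; the paper instead observes that $\varepsilon$ is a $d_{2n+1}$-cycle which \emph{supports} a $d_{2n^2+1}$-differential, from which ``not a $d_{2n+1}$-boundary'' is inferred indirectly. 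Your direct computation makes the dual-cycle condition more transparent.
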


\begin{proof}
By \fullref{cor:ssshifts}, the spectral sequence for $\I\E^{hC_p}$ is isomorphic to a shift of the spectral sequence for $\E^{hC_p}$. 
Note that, since $\Z_p^{\times}$ contains no $p$--torsion, $C_p$ is in the kernel of $\det\colon \G_n \to \Z_p^{\times}$, so that $\pi_* \I\E \cong  \pi_* \Sigma^{-n} \E$ as $C_p$--$\E_*$--modules, thus  
\begin{equation*}
  {H}^{*}(C_p, \pi_{*} \Sigma^n \I\E)   \cong  {H}^{*}(C_p, \pi_{*} \E).
\end{equation*}
We will show that the differentials for the spectral sequence of $\Sigma^n \I\E^{hC_p}$ are the same as those for the spectral sequence for $\Sigma^{np}\E^{hC_p}$. The claim will then follow from \fullref{cor:ssshifts}, noting that $np-n = n^2$.

By \fullref{rem:shift}, it suffices to locate a $d_{2n+1}$--cycle on the zero line of the HFPSS to determine the shift. 
Via the correspondence of \fullref{prop:dualcoh}, we have an isomorphism
\begin{align*}
D_{\Q/\Z}(  \widehat{H}^{s}(C_p,\E_{t}))  \cong \widehat{H}^{n-(s+1)}(C_p, \pi_{2n-t} \Sigma^n \I\E),
\end{align*}
which extends to an isomorphism of spectral sequences. The class $\varepsilon = ab^{\frac{n}{2}-1} \delta^{-\frac{n}{2}+1} $ in $H^{n-1}(C_p, \E_{2n-np})$ is a $d_{2n+1}$--cycle which supports a $d_{2n^2+1}$--differential. Therefore, $\varepsilon^*=D_{\Q/\Z}(\varepsilon)$ is a class in 
$\widehat{H}^{0}(C_p, \pi_{np} \Sigma^n \I\E) $, which is a $d_{2n+1}$--cycle. This proves the claim.
\end{proof}

\captionsetup{width=\linewidth}
\begin{figure}[t]
\centering
\includegraphics[width=\linewidth]{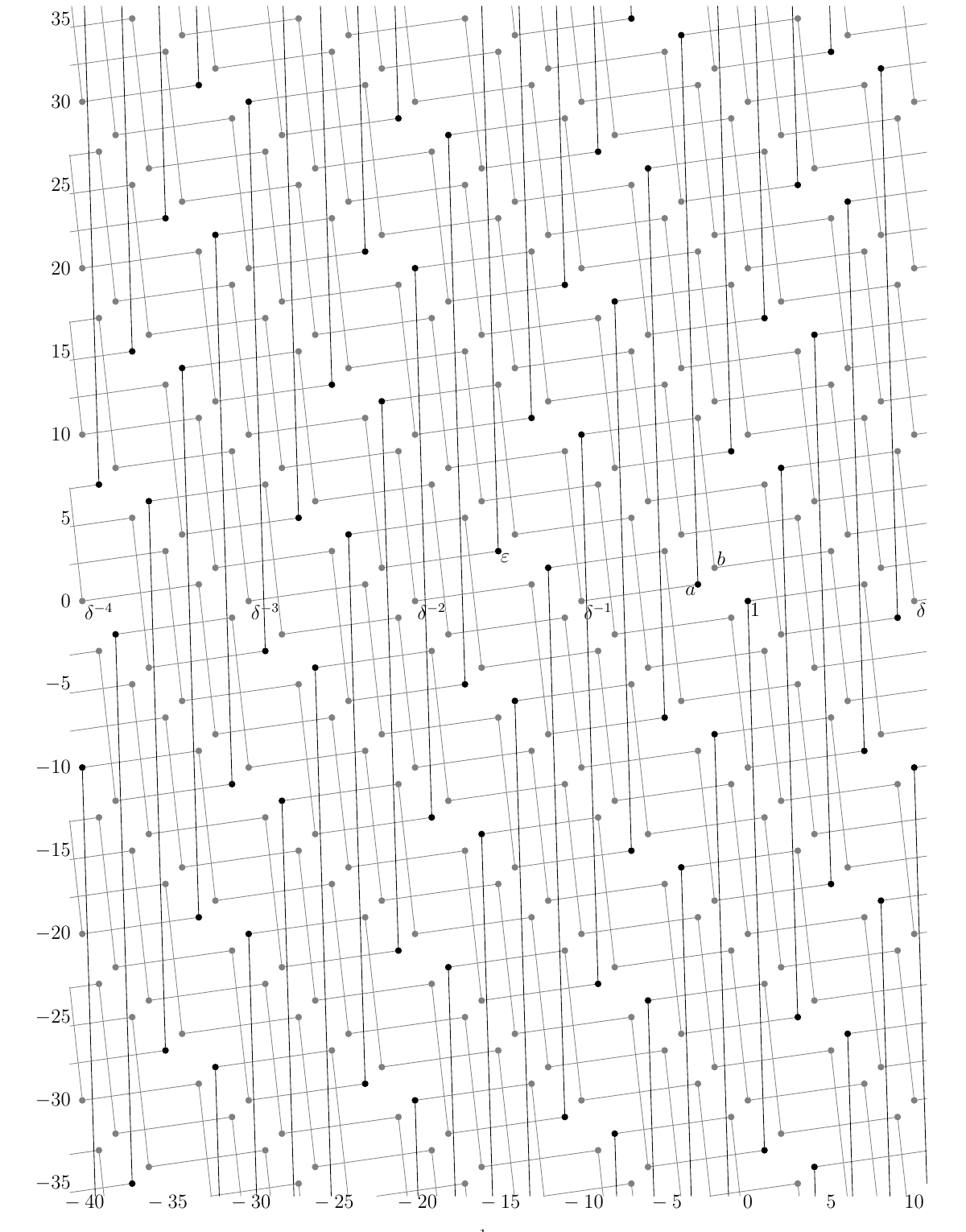}
\caption{The Tate spectral sequence $\widehat{H}^s(C_p, \pi_t \E) \Rightarrow \pi_{t-s}\E^{tC_p}$.
for $p=5$. A $\bullet$ denotes a copy of $\F_{p^n}$. The $d_{2n+1}$ differentials are in gray and the $d_{2n^2+1}$ are in black. See also \fullref{fig:tate5closeup}.}
\label{fig:tate5}
\end{figure}

\captionsetup{width=\linewidth}
\begin{figure}[t]
\centering
\includegraphics[width=\linewidth]{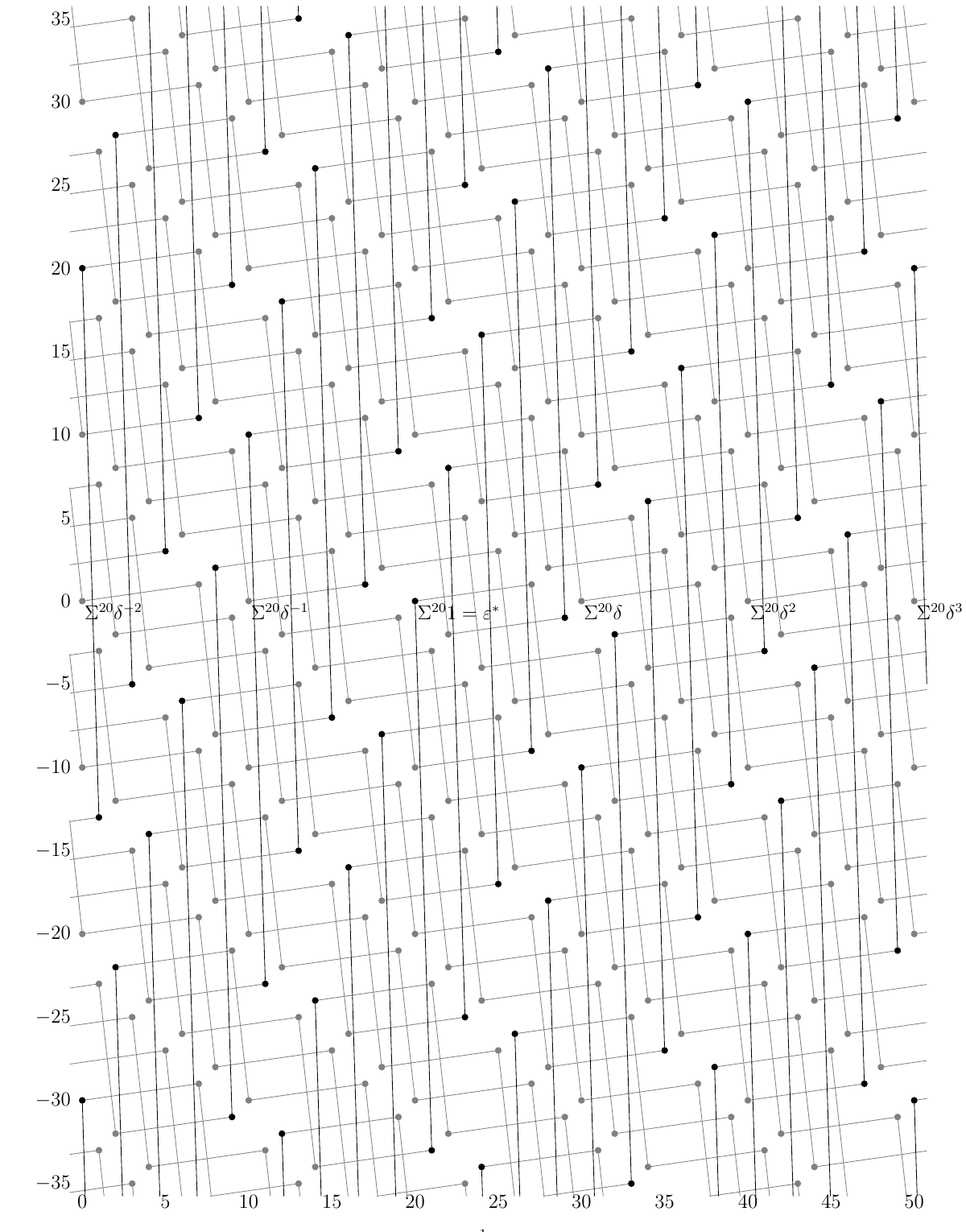}
\caption{The Tate spectral sequence $\widehat{H}^s(C_p, \pi_t\Sigma^n \I\E) \Rightarrow \pi_{t-s}(\Sigma^n \I\E)^{tC_p}$
for $p=5$. A $\bullet$ denotes a copy of $\F_{p^n}$. The $d_{2n+1}$ differentials are in gray and the $d_{2n^2+1}$ are in black. See also \fullref{fig:tate5closeup}.}
\label{fig:tate5dual}
\end{figure}

%%%%
\begin{figure}
\center
\includegraphics[width=\linewidth]{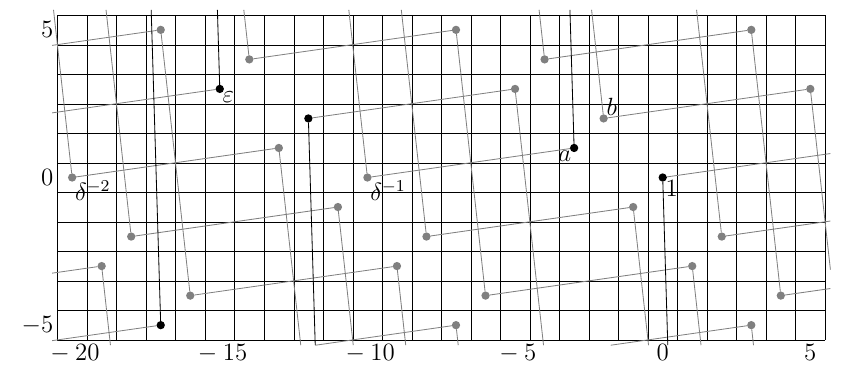}
\includegraphics[width=\linewidth]{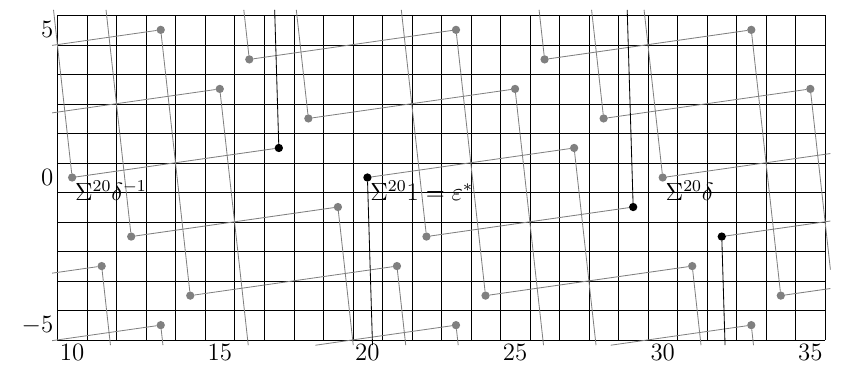}
\captionsetup{width=0.8\linewidth}
\caption{Closeup views of the Tate spectral sequences of \fullref{fig:tate5} (top) and \fullref{fig:tate5dual} (bottom).}
\label{fig:tate5closeup}
\end{figure}
%%%%%

Next, we turn to the groups $F$ and $G$ defined in \eqref{eqn:F} and \eqref{eqn:G}, respectively. The group $F \cong C_p \rtimes C_{n^2}$ is a subgroup of $\mathbb{S}_n$ which is not in the kernel of the determinant.
Therefore, since $\pi_*\Sigma^n \I\E \cong \Edet$ we must study
$\E_*{\left<\det\right>} = \E_* \widehat{\otimes}_{\Z_p} \Z_p{\left<\det\right>}$
as an $F$--module. We will prove the following result.
\begin{prop}\label{prop:IEF}
Let $k = -\frac{n}{2}(n-2)$. As $F$--$\E_*$--modules, $\pi_*\Sigma^n\I\E \cong \pi_*\Sigma^{2pk}\E$. 
\end{prop}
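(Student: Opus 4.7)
The plan is to exhibit an explicit $F$-equivariant isomorphism of graded $\E_*$-modules between $\E_*\langle\det\rangle$ and $\E_{*+pn(n-2)}$. The Strickland formula cited in the proof of \fullref{cor:ssshifts} gives $\pi_*\Sigma^n\I\E \cong \E_*\langle\det\rangle$, while $\pi_*\Sigma^{-pn(n-2)}\E = \E_{*+pn(n-2)}$, so this suffices. Both sides are free of rank one over $\E_*$, hence such an isomorphism is multiplication by a graded unit $v \in \E_{pn(n-2)}^{\times}$ satisfying
\[
\zeta(v) = v, \qquad \tau(v) = \det(\tau)\cdot v.
\]
Note that $\det(\zeta) = 1$ because $\zeta$ is $p$-torsion while $\Z_p^{\times}$ has no $p$-torsion, so the condition on $\zeta$ is simply invariance.

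The first step would be to compute $\det(\tau)$. Let $\omega \in \W^{\times}$ denote the primitive $n^2$-th root of unity with $\tau(u) = \omega u$. Since the determinant is the reduced norm on $\mathbb{S}_n$, one has $\det(\tau) = \omega^{(p^n-1)/(p-1)}$. Expanding $p^k \equiv 1 + kn \pmod{n^2}$ and summing over $0 \le k \le n-1$ yields, for $n=p-1$ even,
\[
\frac{p^n-1}{p-1} \equiv n + \frac{n^2}{2} \pmod{n^2},
\]
so $\det(\tau) = \omega^{n+n^2/2}$.

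The next step uses the Hopkins--Miller element $\varphi(d) \in \E_{-2p}$ of \fullref{thm:getheory} with $k=0$. Because $d = \prod_{g\in C_p} g(z_n)$ runs over a $C_p$-orbit, $\varphi(d)$ is $\zeta$-invariant, and its leading term $c_n^p u^p$ identifies it as a unit in the local ring $\E_*$. Since $C_p$ is normal in $F$ and $\tau\varphi(z_n) \equiv \omega\varphi(z_n)$ modulo the maximal ideal $I_n$, one has $\tau\varphi(d) \equiv \omega^p \varphi(d)$ to leading order; the residual error lies in $1 + I_n$, and can be absorbed by multiplying $\varphi(d)$ by an appropriate $\zeta$-invariant unit in $1 + I_n$, exploiting the vanishing of $H^1(C_{n^2}, (1+I_n)^{C_p})$ (which holds because $|C_{n^2}| = n^2$ is coprime to $p$ while $(1+I_n)^{C_p}$ is pro-$p$). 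After this correction, $\tau(\varphi(d)) = \omega^p \varphi(d)$ exactly. Setting $v = \varphi(d)^{-n(n-2)/2}$ produces a unit in $\E_{pn(n-2)}$ with $\zeta(v) = v$ and $\tau(v) = \omega^{-pn(n-2)/2}\,v$.

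A final congruence using $p = n+1$ and $n$ even gives
\[
-\frac{pn(n-2)}{2} \equiv n + \frac{n^2}{2} \pmod{n^2},
\]
so $\omega^{-pn(n-2)/2} = \det(\tau)$, and multiplication by $v$ is the desired $F$-equivariant isomorphism. The principal technical obstacle is pinning down the exact $\tau$-eigenvalue of $\varphi(d)$: \fullref{thm:getheory} specifies the $\zeta$-action precisely but only controls $\tau$ to leading order, requiring either a Nave-type refined choice of coordinates exhibiting the full $F$-action or the cohomological correction sketched above.
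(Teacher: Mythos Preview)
Your approach is correct and is essentially the same as the paper's: both exhibit the isomorphism as multiplication by a suitable power of the $C_p$-invariant periodicity element in degree $\pm 2p$. The paper's element is $\delta^{k}$ with $k=-\tfrac{n}{2}(n-2)$ (so $\delta$ plays the role of your corrected $\varphi(d)^{-1}$), and the numerical check that this power intertwines the $\tau$-action with the determinant twist is the same congruence you verify at the end.

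The one substantive difference is how the $\tau$-eigenvalue of the periodicity element is obtained. The paper simply cites \cite[Lemma~5.3]{heard_eop} for the exact formula $\tau(\delta)=\eta^{-p}\delta$ and then does the arithmetic (\fullref{lem:basicidentity}, \fullref{lem:isodeltak}). You instead recover this eigenvalue from first principles: compute it modulo $I_n$ from the leading term of $\varphi(d)$, then kill the residual unit in $1+I_n$ using $H^1(C_{n^2},(1+I_n)^{C_p})=0$. That cohomological correction is sound---the cocycle condition follows from $\tau^{n^2}=\mathrm{id}$, and the vanishing holds because $(1+I_n)^{C_p}$ is an abelian pro-$p$ group on which $|C_{n^2}|$ acts invertibly. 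Your route is more self-contained but longer; the paper's is shorter but depends on an external computation. Both land on the same element up to an $F$-invariant unit, so the final identification is identical.

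One small remark: your $\omega$ is the paper's $\eta$ (a primitive $n^2$-th root of unity), not the paper's $\omega$ (a primitive $(p^n-1)$-th root). This does not affect the argument, but be aware of the clash if you compare line by line.
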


In general, for $g\in \G$ and $x\in \E_*$, we write
\begin{align}\label{eqn:actdet}
\widehat{g}(x) = g(x) \det(g)
\end{align}
for the action of $g$ on $\Edet$. Let $\omega \in \W$ be a primitive $p^n-1$--root of unity. Viewing $\W^{\times}$ as a subgroup of $\mathbb{S}_n$, $F$ is generated by $\zeta \in C_p$ and
$\tau = \omega^{\frac{p^{n}-1}{n^2}}$. We write $\eta =  \omega^{\frac{p^n-1}{n^2}}$ when viewing $\W \subseteq \E_0$. Then $e= \eta^{n}$ is a generator of the $n$--torsion in $\Z_p^{\times} \subseteq \E_0$. Further, 
\[\det(\omega) = \prod_{r=0}^{{n-1}}\omega^{p^r} = e,\] 
so it follows that
\[\det(\tau) = e^{\frac{p^n-1}{n^2}} = \eta^{\frac{p^n-1}{n}}.\]
Using the fact that $\det(\zeta)=1$, we collect these observations in the following lemma.
\begin{lem}
For $x \in \E_*{\left<\det\right>}$, the action of $F$ on $x$ is determined by
\begin{align*}
\widehat{\zeta}(x) &= \zeta(x) \\
 \widehat{\tau}(x) &= \tau(x) \eta^{\frac{p^n-1}{n}} . 
\end{align*}
\end{lem}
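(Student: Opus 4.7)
The plan is to simply apply the definition of the twisted action in \eqref{eqn:actdet} to the two generators $\zeta$ and $\tau$ of $F$, using the determinant computations assembled in the paragraph immediately preceding the lemma.

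First I would handle $\widehat{\zeta}$. By \eqref{eqn:actdet} we have $\widehat{\zeta}(x)=\zeta(x)\det(\zeta)$, and the task reduces to showing $\det(\zeta)=1$. This is precisely the observation used in the proof of \fullref{thm:main}: since $\zeta$ has order $p$ and $\det$ takes values in $\Z_p^\times$, which has no $p$--torsion, $\zeta$ lies in the kernel of $\det$. Hence $\widehat{\zeta}(x)=\zeta(x)$.

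Next I would handle $\widehat{\tau}$. By \eqref{eqn:actdet}, $\widehat{\tau}(x)=\tau(x)\det(\tau)$, so I just need to quote the computation of $\det(\tau)$ from the paragraph above the lemma. There, $\tau$ is identified with the element $\omega^{(p^n-1)/n^2}\in \W^\times\subseteq \mathbb{S}_n$, and using multiplicativity of $\det$ together with the formula $\det(\omega)=\prod_{r=0}^{n-1}\omega^{p^r}=e=\eta^n$ one obtains
\[
\det(\tau)=\det(\omega)^{\frac{p^n-1}{n^2}}=e^{\frac{p^n-1}{n^2}}=\eta^{\frac{p^n-1}{n}}.
\]
Substituting this into the defining formula for $\widehat{\tau}$ gives $\widehat{\tau}(x)=\tau(x)\eta^{(p^n-1)/n}$.

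Finally, since $F$ is generated by $\zeta$ and $\tau$ by \eqref{eqn:F}, the twisted action of all of $F$ on $\E_*\langle\det\rangle$ is determined by its values on these two generators, which completes the verification. There is no real obstacle here; the statement is a bookkeeping lemma collecting the determinant values computed in the preceding discussion, and the only point worth emphasizing is the appeal to the absence of $p$--torsion in $\Z_p^\times$ to kill $\det(\zeta)$.
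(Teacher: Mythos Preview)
Your proposal is correct and matches the paper's approach exactly: the paper presents this lemma without a separate proof, simply stating that it ``collects these observations'' from the preceding paragraph (namely $\det(\zeta)=1$ and $\det(\tau)=\eta^{(p^n-1)/n}$) together with the defining formula \eqref{eqn:actdet}. Your write-up just makes these steps explicit.
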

\begin{rem}
If $p=3$, then $\frac{p^n-1}{n^2}=2$ and $e^2=1$. So, in this case (but not more generally) $F$ in fact is in the kernel of the determinant. 
\end{rem}
It is shown in 
\cite[Lemma 5.3]{heard_eop} that $\tau(\delta) = \eta^{-p}\delta$.
We use this to prove the following result.

\begin{lem}\label{lem:basicidentity}
The class $\delta^k \in \E_*{\left<\det\right>}$, where $k= -\frac{n}{2}(n-2)$, is invariant under the action of $F$.
\end{lem}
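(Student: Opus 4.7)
The plan is to verify invariance of $\delta^k$ under each of the two generators $\zeta$ and $\tau$ of $F$ separately, using the formulas for the twisted action displayed just above the statement.

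\textbf{Step 1 ($\widehat\zeta$-invariance).} Since $\det(\zeta) = 1$, the twist is trivial for $\zeta$, so I only need $\zeta(\delta) = \delta$ on the nose. This is essentially built into \fullref{thm:getheory}: up to a unit, $\delta$ is the image under the $C_p$-equivariant map $\varphi$ of the norm element $d = \prod_{g\in C_p} g(z_n)$, which is tautologically $C_p$-invariant. Hence $\widehat{\zeta}(\delta^k) = \zeta(\delta)^k = \delta^k$.

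\textbf{Step 2 ($\widehat\tau$-invariance, reduction to a congruence).} Using the identity $\tau(\delta) = \eta^{-p}\delta$ from \cite[Lemma 5.3]{heard_eop} together with the displayed formula $\widehat{\tau}(x) = \tau(x)\,\eta^{(p^n-1)/n}$, I would compute
\[
\widehat{\tau}(\delta^k) \;=\; \eta^{-pk + (p^n-1)/n}\,\delta^k.
\]
Because $\eta$ has multiplicative order exactly $n^2$ in $\W^{\times}$, the lemma reduces to the congruence
\[
-pk + \frac{p^n-1}{n} \;\equiv\; 0 \pmod{n^2}, \qquad k = -\tfrac{n(n-2)}{2}.
\]

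\textbf{Step 3 (verifying the congruence).} I would expand everything $n$-adically. Writing $p = 1 + n$, the binomial theorem gives $p^i \equiv 1 + in \pmod{n^2}$, and summing the geometric series yields
\[
\frac{p^n-1}{n} \;=\; \sum_{i=0}^{n-1} p^i \;\equiv\; n + \frac{n^2(n-1)}{2} \pmod{n^2}.
\]
Since $p$ is odd, $n$ is even and $n-1$ is odd, so $\frac{n^2(n-1)}{2} \equiv \frac{n^2}{2} \pmod{n^2}$. Substituting the chosen value of $k$ and simplifying using $p - 1 = n$, the left-hand side of the congruence collapses to $\tfrac{n^3}{2} = n^2 \cdot \tfrac{n}{2}$, which is divisible by $n^2$ because $n$ is even. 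This completes the verification.

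\textbf{Anticipated obstacle.} There is no substantive obstacle; the computation is tight but routine. The only mildly subtle point is the $2$-adic bookkeeping, and indeed the specific value $k = -\tfrac{n(n-2)}{2}$ is dictated precisely by the need to cancel the $\tfrac{n^2}{2}$ contribution from the determinant twist against the contribution coming from the action of $\tau$ on powers of $\delta$.
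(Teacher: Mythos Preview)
Your proof is correct and follows essentially the same approach as the paper: reduce to the $\tau$-action using $C_p$-invariance of $\delta$, then verify the congruence $\tfrac{n}{2}(n-2)p + \tfrac{p^n-1}{n} \equiv 0 \pmod{n^2}$ via the geometric-series expansion of $\tfrac{p^n-1}{n}$. The only differences are cosmetic---you supply a bit more justification for the $C_p$-invariance of $\delta$, and your final arithmetic collapses the expression to $\tfrac{n^3}{2}$ rather than the paper's $n^2(n-1)$, but both are visibly divisible by $n^2$.
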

\begin{proof}
Since $\delta$ is invariant under the action of $C_p$, it is sufficient to check that $\widehat{\tau}(\delta^k) = \delta^k$. We have
\[\widehat{\tau}(\delta^{k}) = \eta^{ \frac{n}{2}(n-2)p +\frac{p^n-1}{n} }\delta^{k}. \]
Since $\eta^{n^2} \equiv 1$, we must show that 
$\frac{n}{2}(n-2)p +\frac{p^n-1}{n}\equiv 0$ modulo $(n^2)$.
However, modulo $(n^2)$,
\begin{align*}
\frac{p^n-1}{n} = \frac{(n+1)^n-1}{(n+1)-1} = \sum_{i=0}^{n-1}(n+1)^i \equiv  \sum_{i=0}^{n-1}(in+1) = \frac{n\cdot n(n-1)}{2}+n. 
\end{align*}
Therefore,
\begin{align*}
 \frac{n}{2} (n-2)p +\frac{p^n-1}{n}
&\equiv  \frac{n}{2} (n-2)p + \frac{n\cdot n(n-1)}{2}+n \\
&= \frac{n\cdot n(n-1)}{2}+\frac{n\cdot n(n-1)}{2} \equiv 0 \mod (n^2). \qedhere
\end{align*}
\end{proof}

\begin{lem}\label{lem:isodeltak}
Let $k = -\frac{n}{2}(n-2)$. 
The map $\varphi\colon \Sigma^{2pk}\E_{*} \to \E_{*}\langle {\det}\rangle $ defined by $\varphi(x) = x \delta^k$ is an isomorphism of $\E_*$--$F$--modules.
\end{lem}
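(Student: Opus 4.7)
The plan is to observe that this lemma is essentially a packaging of \fullref{lem:basicidentity}, together with the fact that $\delta$ is a unit in $\E_*$. First I would explain the latter point: although $\delta$ is introduced as a class in $\widehat{H}^0(C_p,\E_*)$, it has cohomological degree $0$, so it is represented by a genuine element of $\E_*^{C_p}\subseteq \E_*$; via the isomorphism of \fullref{thm:getheory} (applied with $k=0$), a suitable lift of $\delta$ corresponds to a unit multiple of $d^{-1}$, and $d$ is inverted in the ring $\E_*$. Consequently, $\delta^k \in \E_*$ is a unit, and multiplication by $\delta^k$ gives a well-defined $\E_*$-linear automorphism which raises internal degree by $2pk = p\cdot n(n-2)$. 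In particular, $\varphi$ is a degree-preserving $\E_*$-linear bijection from $\Sigma^{2pk}\E_*$ onto the underlying $\E_*$-module of $\E_*\langle\det\rangle$.

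It remains to verify $F$-equivariance. I would unwind the definition of the determinant-twisted action on the target. For any $g \in F$ and $x \in \Sigma^{2pk}\E_*$, formula \eqref{eqn:actdet} gives
\[
\widehat{g}(\varphi(x)) \;=\; \widehat{g}(x\delta^k) \;=\; g(x)\,g(\delta^k)\,\det(g) \;=\; g(x)\,\widehat{g}(\delta^k).
\]
By \fullref{lem:basicidentity}, $\widehat{g}(\delta^k) = \delta^k$ for every $g \in F$, so the right-hand side equals $g(x)\,\delta^k = \varphi(g\cdot x)$, where $g\cdot x$ denotes the untwisted $F$-action on the source. This is exactly the statement that $\varphi$ intertwines the $F$-action on $\Sigma^{2pk}\E_*$ with the $\widehat{F}$-action on $\E_*\langle\det\rangle$.

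The only mildly delicate point is the invertibility of $\delta$ as an honest element (as opposed to a Tate cohomology class); once this is in place, the equivariance check is immediate from \fullref{lem:basicidentity}, and no further computation is required.
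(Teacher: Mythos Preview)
Your proof is correct and follows essentially the same approach as the paper: use that $\delta$ is a unit to get the $\E_*$-module isomorphism, then use \fullref{lem:basicidentity} together with the computation $\widehat{g}(x\delta^k)=g(x)\,\widehat{g}(\delta^k)$ to verify $F$-equivariance. The paper checks the $C_p$-case and the $\tau$-case separately, whereas you invoke \fullref{lem:basicidentity} directly for all of $F$; this is a cosmetic streamlining, not a different argument. One minor slip: you write $2pk = p\cdot n(n-2)$, but since $k=-\tfrac{n}{2}(n-2)$ the correct value is $2pk=-pn(n-2)$; this does not affect the logic.
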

\begin{proof}
Since $\delta$ is a unit in $\E_{*}$, this is an isomorphism of $\E_*$--modules. Further, $\delta$ is fixed by the action of $C_p$ and, since $C_p$ is in the kernel of the determinant, $\varphi$ is an isomorphism of $C_p$--modules. So, it suffices to check that $\varphi$ is equivariant under the action of $\tau$. 

However, 
\begin{align*}
\widehat{\tau}(\varphi(x))  &=  \widehat{\tau}(x\delta^k) \\
& = \tau(x \delta^k) \det(\tau) \\
&=\tau(x) \tau( \delta^k) \det(\tau)  \\
&= \tau(x) \widehat{\tau}(\delta^k).
\end{align*}
By \fullref{lem:basicidentity}, $ \widehat{\tau}(\delta^k)  =\delta^k$. So, we can conclude that 
\[\widehat{\tau}(\varphi(x)) = \tau(x)\delta^k = \varphi(\tau(x)),\]
as desired.
\end{proof}

\fullref{prop:IEF} is then a direct consequence of \fullref{lem:isodeltak}. We can now prove the second part of \fullref{main:intro}, which is illustrated in \fullref{fig:tate5G}.

\begin{thm}\label{thm:mainF}
For $p\geq 3$, $n=p-1$, and $H$ the subgroup $G$ or $F$ of $\G$ as defined in \eqref{eqn:G}, there is an equivalence 
$I(\E^{hH}) \simeq \Sigma^{np^2+n^2 } \E^{hH}$.
\end{thm}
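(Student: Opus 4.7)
The plan is to mimic the argument for \fullref{thm:main}. By \fullref{cor:ssshifts} we have $\I\E^{hH} \simeq \Sigma^{k_I^H}\E^{hH}$ for some shift $k_I^H$, and by \fullref{rem:shift} this shift equals the internal degree of any $d_{2n+1}$-cycle lying on the zero line of the Tate spectral sequence for $\Sigma^n\I\E^{hH}$. I will produce such a class by invoking the Pontryagin duality of \fullref{prop:dualcoh}, which combined with the shift $\pi_t\Sigma^n\I\E=\pi_{t-n}\I\E$ gives a differential-compatible isomorphism
\[
\widehat{H}^{0}(H, \pi_r \Sigma^n \I\E) \cong D_{\Q/\Z}\bigl(\widehat{H}^{n-1}(H, \pi_{2n-r} \E)\bigr).
\]
Under this correspondence a zero-line class is a $d_{2n+1}$-cycle exactly when its dual class in the $(n-1)$-column of the TSS for $\E^{hH}$ is not a $d_{2n+1}$-boundary.

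The task is then reduced to a computation inside the TSS for $\E^{hH}$. Since $n = p-1$ is even, by \fullref{thm:cohF} and \fullref{thm:HMTateSS} the $(n-1)$-column is generated by classes of the form $\alpha\beta^{(n-2)/2}\Delta^{d}$, of internal degree $2n + pn(n-2) + 2pn^2 d$. The only $d_{2n+1}$-differentials landing in this column come from $(-n-2)$-column monomials $\beta^{-(n+2)/2}\Delta^{d+1}$; using $d_{2n+1}(\Delta)\doteq\alpha\beta^n$ and the Leibniz rule, I expect
\[
d_{2n+1}\bigl(\beta^{-(n+2)/2}\Delta^{d+1}\bigr) \;\doteq\; (d+1)\,\alpha\beta^{(n-2)/2}\Delta^{d},
\]
so $\alpha\beta^{(n-2)/2}\Delta^{d}$ fails to be a $d_{2n+1}$-boundary precisely when $d\equiv -1 \pmod p$.

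Taking $d=-1$ exhibits a non-boundary class at internal degree $2n + pn(n-2) - 2pn^2$, whose Pontryagin dual is a $d_{2n+1}$-cycle at $(0,r)$ with $r = 2n - (2n + pn(n-2) - 2pn^2) = pn^2 + 2pn$. By \fullref{rem:shift} this pins down the shift: $\Sigma^n\I\E^{hH}\simeq \Sigma^{pn^2+2pn}\E^{hH}$, hence $\I\E^{hH}\simeq \Sigma^{pn^2+2pn-n}\E^{hH}$. A short substitution using $n=p-1$ gives $pn^2+2pn-n = n(p^2+p-1) = np^2 + n^2$, matching the claim. The computation applies uniformly to $H=F$ and $H=G$, since by \fullref{thm:cohF} and \fullref{thm:HMTateSS} the Tate cohomology rings share the same generators and differentials.

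The main obstacle is the duality bookkeeping, in particular pinning down the degree correspondence $(s,t) \leftrightarrow (n-1-s, 2n-t)$ inherited from \fullref{prop:dualcoh} after the $\Sigma^n$-shift, and confirming that under Pontryagin duality the condition ``cycle'' on the zero line of the TSS for $\Sigma^n\I\E^{hH}$ translates to ``non-boundary'' in the $(n-1)$-column of the TSS for $\E^{hH}$. Once these conventions are fixed, the remaining content is algebraic manipulation with the Hopkins--Miller description of the Tate cohomology rings, together with a single appeal to \fullref{rem:shift}.
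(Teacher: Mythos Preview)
Your proposal is correct and follows essentially the same route as the paper's proof. Both arguments invoke \fullref{cor:ssshifts} and \fullref{rem:shift} to reduce to locating a $d_{2n+1}$--cycle on the zero line of the TSS for $\Sigma^n\I\E^{hH}$, and both use \fullref{prop:dualcoh} to produce it as the Pontryagin dual of the class $\alpha\beta^{(n-2)/2}\Delta^{-1}$ in $\widehat{H}^{n-1}(H,\E_*)$; your class is the paper's $\varepsilon = \alpha\beta^{n/2-1}\Delta^{-1}$, and the final arithmetic $pn^2+2pn-n = np^2+n^2$ agrees with the paper's $2np+n^2p-n=np^2+n^2$.

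The only minor differences are presentational. The paper first invokes \fullref{prop:IEF} to identify the $E_2$--term of $\Sigma^n\I\E^{hF}$ with that of $\Sigma^{2np+n^2p}\E^{hF}$ before locating the cycle, whereas you go directly through \fullref{rem:shift}; since \fullref{cor:ssshifts} already packages the needed input from \fullref{lem:isodeltak}, your shortcut is legitimate. Also, where the paper certifies that $\varepsilon$ survives to $E_{2n+2}$ by noting it supports a $d_{2n^2+1}$--differential, you verify directly via the Leibniz rule that $\alpha\beta^{(n-2)/2}\Delta^{d}$ fails to be a $d_{2n+1}$--boundary exactly when $d\equiv -1\pmod p$; this is the more transparent formulation of what the Pontryagin duality requires (since each bidegree is one-dimensional over $\F_p$ or $\F_{p^n}$, ``dual class is a cycle'' is equivalent to ``original class is not a boundary'').
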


\begin{proof}
The claim for $G$ follows from that for $F$ by taking Galois invariants (see \cite[Section 1.4]{goerss_bobkova} for details on the passage to Galois fixed points). 
Again, let $k = -\frac{n}{2}(n-2)$. By \fullref{prop:IEF}, 
\[H^s(F, \pi_t\Sigma^n \I\E) 
\cong H^s(F, \pi_t\Sigma^{2pk}\E). \]
Further, from the $2pn^2$--periodicity of $H^s(F, \pi_t\E)$ with respect to $t$, it follows that the $E_2$--term for $\Sigma^n \I\E^{hF}$ is isomorphic to that for $\Sigma^{2pn^2+2pk}\E^{hF} = \Sigma^{2np+n^2p}\E^{hF} $. We prove that these spectral sequences have the same differentials. The result then follows by \fullref{cor:ssshifts}. 

Again, it suffices to locate a $d_{2n+1}$--cycle on the zero line of the TSS for $\Sigma^n \I\E^{hF}$.
From \fullref{prop:dualcoh}, there is an isomorphism
\[D_{\Q/\Z}(  \widehat{H}^{s}(F,\E_{t}))  \cong \widehat{H}^{n-(s+1)}(F, \pi_{2n-t} \Sigma^n \I\E) ,\]
which extends to an isomorphism of spectral sequences. The class $\varepsilon = \alpha \beta^{\frac{n}{2}-1}\Delta^{-1} $ in $H^{n-1}(F, \E_{2n-2np-n^2p})$ is a $d_{2n+1}$--cycle which supports a $d_{2n^2+1}$--differential. Therefore, $\varepsilon^*=D_{\Q/\Z}(\varepsilon)$ is a class in 
$\widehat{H}^{0}(F, \pi_{2np+n^2p} \Sigma^n \I\E) $, which is a $d_{2n+1}$--cycle. Noting that $2np+n^2p-n = np^2+n^2 $ proves the claim.
\end{proof}

\newpage
%%%%%
\begin{figure}[H]
\center
\includegraphics[height=0.45\textheight]{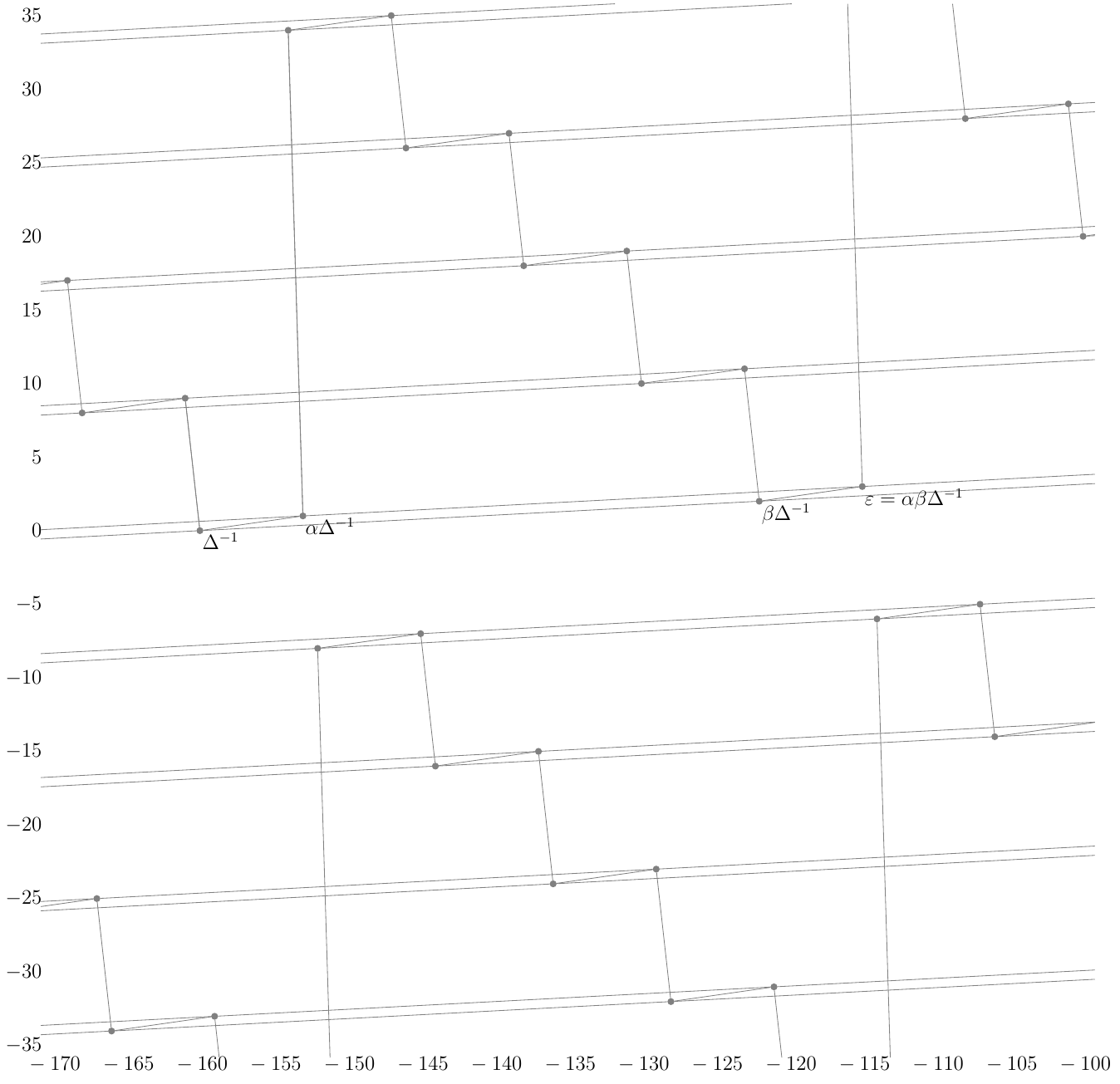}\\
\vspace{1cm}
\includegraphics[height=0.45\textheight]{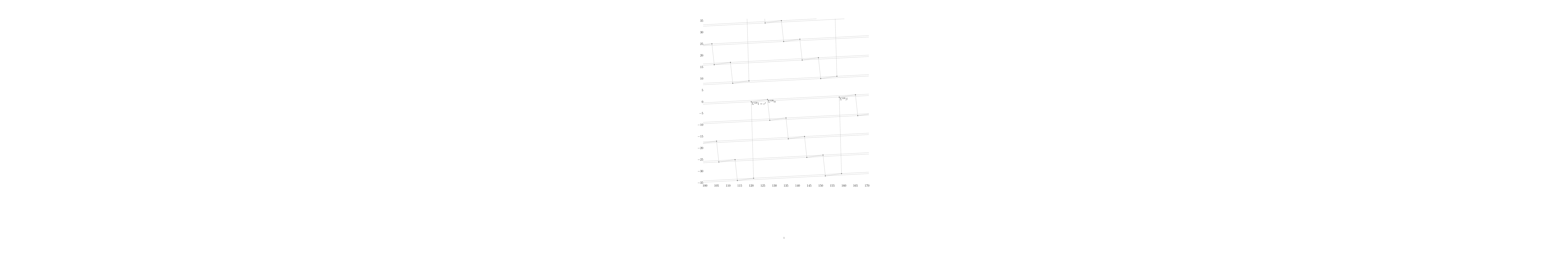}
\captionsetup{width=0.8\textwidth}
\caption{The Tate spectral sequences $\widehat{H}^s(F, \pi_t \E) \Rightarrow \pi_{t-s}\E^{tF}$ (top) and $\widehat{H}^s(F, \pi_t\Sigma^n \I\E) \Rightarrow \pi_{t-s}(\Sigma^n \I\E)^{tF}$ (bottom), for $p=5$. A $\bullet$ denotes a copy of $\F_{p^n}$. }
\label{fig:tate5G}
\end{figure}
%%%%%

%
%%%%%%%%%%%%%%%%%%%%%

%%%%%%%%%%%%%%

%\bibliographystyle{amsalpha}
%\bibliography{bib}

\providecommand{\bysame}{\leavevmode\hbox to3em{\hrulefill}\thinspace}
\providecommand{\MR}{\relax\ifhmode\unskip\space\fi MR }
% \MRhref is called by the amsart/book/proc definition of \MR.
\providecommand{\MRhref}[2]{%
  \href{http://www.ams.org/mathscinet-getitem?mr=#1}{#2}
}
\providecommand{\href}[2]{#2}

\end{document}